\newtheorem{THM}{\textbf{Theorem}}
\newtheorem{LEM}{\textbf{Lemma}}
\newtheorem{CLA}{\textbf{Claim}}[section]
\newtheorem{CON}[THM]{\textbf{Conjecture}}
\newcommand{\arxiv}[1]{\href{http://arxiv.org/abs/#1}{\texttt{arXiv:#1}}}
\begin{document}
\title{Hamilton cycles in  tough $(2P_2 \cup P_1)$-free graphs}
\author{Songling Shan\footnote{Auburn University, Department of Mathematics and Statistics, Auburn, AL 36849.
		Email: {\tt szs0398@auburn.edu}.   
		Supported in part by NSF grant DMS-2345869.}
		\qquad 
		 Arthur Tanyel\footnote{Auburn University, Department of Mathematics and Statistics, Auburn, AL 36849.
		 	Email:	{\tt ant0034@auburn.edu}. }
}

\date{\today}
\maketitle

\begin{abstract}
In 1973, Chv\'atal conjectured that there exists a constant $t_0$ such that every $t_0$-tough graph on at least three vertices is Hamiltonian. While this conjecture is still open, work has been done to confirm it for several graph classes, including all $F$-free graphs for every 5-vertex linear forest $F$ other than $P_5$ and $2P_2\cup P_1$. In this note, we show that  11-tough $(2P_2 \cup P_1)$-free graphs on at least three vertices are Hamiltonian.

\medskip 

\emph{\textbf{Keywords}.}    Hamilton cycle; Toughness; Chv\'atal's toughness conjecture  

\end{abstract}

\section{Introduction}

All graphs in this paper are simple. 
Let $G$ be a graph.
We use $V(G)$ and $E(G)$ to denote the vertex set and edge set of $G$, respectively.
For a vertex $v \in V(G)$, we denote by $N_G(v)$ the set of neighbors of $v$.
For $S \subseteq V(G)$, let $N_G(S)=(\bigcup_{x\in S} N_G(x))\,\setminus S$. 
The degree of $v$ in $G$ is denoted by 
  $d_G(v)$, and let $d_G(v,S) = |N_G(v) \cap S|$ for $S\subseteq V(G)$.  
For $H\subseteq G$, we write $d_G(v,H)$ for $d_G(v, V(H))$. 
For  $S \subseteq V(G)$, 
we let $G[S]$ denote the subgraph of $G$ induced on $S$, and let 
 $G - S = G[V(G) \setminus S]$.
We write $G - v$  for  $G - \{v\}$.
We write $u \sim v$ if two vertices $u$ and $v$ are adjacent in $G$ and $u \nsim v$ otherwise.
Let $V_1,V_2 \subseteq V(G)$ be disjoint. Then $E_G(V_1,V_2)$ denotes the set of edges of $G$
with one endvertex in $V_1$ and the other in $V_2$, and 
 $G[V_1,V_2]$   denotes the bipartite subgraph of $G$ with bipartitions $V_1$ and $V_2$ and  edge set $E_G(V_1,V_2)$.

Given a graph $R$, we say that  $G$ is \emph{$R$-free} if $G$ does not contain $R$ as an induced subgraph. 
For an integer $k\ge 2$, we use $kR$ to denote the disjoint union of $k$ copies of $R$. When we say that $G$ is $(R_1\cup R_2)$-free,  we  take  $(R_1\cup R_2)$ as the vertex-disjoint union of 
two graphs $R_1$ and $R_2$. We use $P_n$ to denote a path on $n$ vertices.   

A graph is called \emph{Hamiltonian} if it contains a Hamilton cycle, and \emph{Hamiltonian-connected} if there exists a Hamilton path between every pair of distinct vertices. In 1973, Chv\'atal studied Hamilton cycles through the notion of \emph{toughness}~\cite{chvatal-tough-c}.
Let $c(G)$ denote the number of components of a graph $G$, and let $t \ge 0$ be a real number. The \emph{toughness} of a graph $G$, denoted by $\tau(G)$, is defined as
$$
\tau(G) = \min\left\{\frac{|S|}{c(G - S)} : S \subseteq V(G),\ c(G - S) \ge 2\right\},
$$
if $G$ is not complete; otherwise, $\tau(G) = \infty$.  
A graph $G$ is said to be \emph{$t$-tough} if $\tau(G) \ge t$.
In~\cite{chvatal-tough-c}, Chv\'atal proposed the following conjecture.

\begin{CON}[Chv\'atal's toughness conjecture]
    There exists a constant $t_0$ such that every $t_0$-tough graph on at least three vertices is Hamiltonian.
\end{CON}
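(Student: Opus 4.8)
The statement is Chv\'atal's toughness conjecture itself, which remains open in full generality, so what follows is the line of attack I would pursue rather than a completed proof. The natural starting point is to convert the global toughness hypothesis into usable local structure. A $t$-tough graph is $2t$-connected, satisfies $\alpha(G) \le |V(G)|/(t+1)$, and has minimum degree at least $2t$; so for a large threshold $t_0$ one is handed a highly connected graph with bounded independence relative to its order. The first thing I would test is a Chv\'atal--Erd\H{o}s-type reduction: the classical theorem yields a Hamilton cycle whenever $\kappa(G) \ge \alpha(G)$, and here $\kappa(G) \ge 2t_0$, so it would suffice to force $\alpha(G) \le 2t_0$. The trouble is immediate and instructive: toughness only gives $\alpha(G) \le |V(G)|/(t_0+1)$, which exceeds $2t_0$ as soon as $|V(G)| > 2t_0(t_0+1)$. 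Thus no fixed toughness threshold can bound the independence number by a constant, and the clean connectivity-versus-independence route is closed for large graphs --- already a symptom of the central difficulty.

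The approach with the best track record is the longest-cycle (rotation--extension) method. One takes a longest cycle $C$ and assumes for contradiction that $G - V(C) \ne \emptyset$; picking a component $H$ of $G - V(C)$, its neighbors on $C$ cannot be cyclically consecutive (else $C$ extends), and for a fixed orientation the successors of the $C$-neighbors of a vertex of $H$ form an independent set that is moreover nonadjacent to $H$. The plan is to collect these ``blocked'' successor positions across all components of $G - V(C)$ and across all cycles obtainable from $C$ by rotations, assemble them into a separating set $S$, and argue that $G - S$ splits into strictly more than $|S|/t_0$ components, contradicting $\tau(G) \ge t_0$. For this to work one needs the number of components produced to grow with the size of the separator, which is exactly the leverage a large toughness constant is meant to supply.

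The main obstacle, and the reason the conjecture has stood for fifty years, is that toughness is a genuinely global min-over-all-cuts quantity that resists localization: a graph can be $t$-tough while still containing long, lightly attached ``corridors'' along a longest cycle where the rotation bookkeeping stalls, and the independent sets manufactured by rotations need not align with the particular cuts that witness the toughness bound. The constructions of $(\tfrac{9}{4} - \varepsilon)$-tough non-Hamiltonian graphs show that the true threshold, if it exists, is already at least $\tfrac{9}{4}$, ruling out the most optimistic constants and confirming that a purely soft argument will not suffice. Consequently I would not expect a clean uniform proof; the realistic program --- the one this paper exemplifies --- is to first settle the conjecture on structurally restricted families where toughness does localize into neighborhood conditions (here, $(2P_2 \cup P_1)$-free graphs, for which forbidding that induced subgraph forces enough local adjacency that $t_0 = 11$ suffices), and to treat the general conjecture as the eventual synthesis of many such cases.
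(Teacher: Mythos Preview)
You have correctly identified that the statement is Chv\'atal's toughness conjecture itself, which the paper presents as an open problem and does \emph{not} prove; there is no proof in the paper to compare against. The paper's contribution is Theorem~\ref{main}, the special case of $(2P_2\cup P_1)$-free graphs with $t_0=11$.

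Your write-up is therefore not a proof but a survey of obstacles, and as such it is accurate and well-informed: the Chv\'atal--Erd\H{o}s reduction really does fail for the reason you give, the rotation--extension framework is indeed the standard engine for longest-cycle arguments, and the Bauer--Broersma--Veldman $\tfrac{9}{4}$ constructions are the right cautionary example. Your closing paragraph also matches the paper's philosophy exactly---attack structured subclasses where toughness can be converted into workable local conditions. One minor sharpening: the paper's actual mechanism for the $(2P_2\cup P_1)$-free case is not rotation--extension on a longest cycle but rather a two-part decomposition (low-degree vertices wrapped into short paths, high-degree remainder handled via the H\"aggkvist--Thomassen cycles-through-specified-edges lemma), so if you want your sketch to anticipate the paper's method you would mention that route as well. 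But since the conjecture is open, no gap can be charged against you; your assessment that a uniform proof is not currently in reach is the consensus view.
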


 Bauer, Broersma and Veldman~\cite{Tough-counterE} have constructed
 $t$-tough graphs that are not Hamiltonian for all $t < \frac{9}{4}$, so
 $t_0$ must be at least $\frac{9}{4}$ if Chv\'atal's toughness conjecture is true.
 The conjecture has
 been verified  for certain classes of graphs including 
 planar graphs, claw-free graphs, co-comparability graphs, and
 chordal graphs. For a more comprehensive list of graph classes for which the conjecture holds, see  the survey article by Bauer, Broersma, and Schmeichel~\cite{Bauer2006}  in 2006.  Some recently established families of 
 graphs for which the conjecture holds include $2K_2$-free graphs~\cite{2k2-tough,1706.09029,OS2021}, 
 and $R$-free 
 graphs if $R$ is a 4-vertex linear forest~\cite{MR3557210} or  $R\in \{P_4 \cup P_1, P_2\cup P_3, P_3\cup 2P_1,  P_2\cup 3P_1, P_2\cup kP_1\}$~\cite{shan2025p4unionp1, S2021, P3-union-2P1,  HG2021, MR4455928,  MR4676626, MR4658427}, where  
 $k\ge 4$ is an integer. In general, the conjecture is still wide open. 
We obtain the following result, which,  when combined with previous results, confirms Chv\'atal's toughness conjecture for all $R$-free graphs where $R$ is a  five-vertex linear forest except $P_5$.

\begin{THM}\label{main}
    	Every 11-tough $(2P_2 \cup P_1)$-free graph  on at least three vertices is Hamiltonian.
\end{THM}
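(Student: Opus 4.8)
The plan is to take a longest cycle $C$ in $G$ (assuming $G$ is $2$-connected, which follows from $\tau(G)\ge 11$) and derive a contradiction if $C$ is not a Hamilton cycle. Suppose there is a vertex in $H := G - V(C)$. Fix an orientation $\oC$ of $C$; for $v\in V(C)$ write $v^+, v^-$ for the successor and predecessor along $\oC$. A standard tool is that for any component $D$ of $H$ and its set of attachment vertices $A = N_C(D)$ on $C$, the set $A^+ = \{a^+ : a\in A\}$ is an independent set avoiding $N(D)$, and moreover the usual crossing/hopping arguments force strong structure near the attachment points. Since $|A|\le$ (no toughness bound directly), but toughness tells us: deleting $A$ leaves at least $c(G-A) \ge |D\text{-parts}| + \dots$ components, so $|A| \ge \tau(G)\cdot c \ge 11$ whenever $G - A$ is disconnected; in particular attachment sets on $C$ are large, which is what we will exploit.

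The core of the argument is to show that $(2P_2\cup P_1)$-freeness, combined with largeness of the cycle and of the attachment sets, yields a longer cycle. First I would analyze the induced subgraph structure: pick an edge $xy$ of $H$ (or a single vertex of $H$ together with a non-neighbor configuration) to serve as one $P_2$; then, using two far-apart segments of $C$ where the successor vertices $a^+, b^+$ behave independently, try to locate a second induced $P_2$ and an additional isolated vertex, producing an induced $2P_2\cup P_1$ — contradiction. Concretely, the key steps in order: (1) record basic facts about longest cycles (the Hopping Lemma / Woodall-type exchange, independence of $N_C(D)^+$); (2) use $\tau(G)\ge 11$ to bound the number of components of $H$ and to show $|N_C(D)|$ is large for each component $D$ of $H$, hence $C$ is long and has many ``insertion slots''; (3) case analysis on $|V(H)|$ and on whether $H$ has an edge: in each case exhibit an induced $2P_2\cup P_1$ unless the structure collapses to something that admits a longer cycle or a Hamilton cycle directly; (4) in the residual ``collapsed'' case (e.g. $H$ is a single vertex, or all of $G$ looks like a blow-up of a small pattern), use toughness once more to finish, since $11$-toughness is far more than enough to rule out small sporadic non-Hamiltonian configurations.

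The main obstacle I expect is Step (3): controlling the interaction between the attachment structure on $C$ and the forbidden induced subgraph when $H$ is ``rich'' (has several components, or a component with an edge and several attachments). The delicate point is that $2P_2\cup P_1$ has only five vertices, so finding it requires simultaneously exhibiting two independent edges and a vertex non-adjacent to all four endpoints, all inside an induced subgraph; the segments of $C$ one uses to build these edges must be chosen so that no ``unexpected'' chords appear, and ruling out such chords is exactly where one must invoke maximality of $C$ (a chord between successor-vertices of two attachments would create a longer cycle by rerouting). I anticipate that the bound $11$ is not tight and is chosen to make this bookkeeping comfortable — it guarantees enough attachment vertices that, after discarding those involved in bad chords, plenty remain to complete the forbidden configuration. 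A secondary nuisance is the base/degenerate cases (very small $G$, or $G$ complete), handled separately and trivially.

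\medskip

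\noindent\textbf{Remark on strategy.} An alternative I would keep in reserve: instead of a longest-cycle argument, use the known characterization-style results for $2K_2$-free graphs together with the observation that a $(2P_2\cup P_1)$-free graph is ``close to'' $2P_2$-free, localizing the failure of $2K_2$-freeness to a controlled set of vertices and then patching. But the direct longest-cycle approach above seems more robust and is the one I would write up first.
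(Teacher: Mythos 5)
Your submission is a strategy outline rather than a proof: the entire mathematical content is deferred to your Step (3), which you explicitly identify as ``the main obstacle'' and then do not carry out. Everything before it (longest cycle $C$, attachment sets $A=N_C(D)$, independence of $A^+$, toughness forcing $|A|\ge \tau(G)\,c(G-A)$) is standard and fine, but it does not by itself produce either a longer cycle or an induced $2P_2\cup P_1$. The crux is exactly the simultaneous construction of two induced, mutually non-adjacent $P_2$'s together with a fifth vertex non-adjacent to all four endpoints, with all ``unexpected'' chords ruled out by maximality of $C$; you acknowledge this is delicate and leave it entirely unexecuted, and you never verify that the constant $11$ suffices for the bookkeeping you anticipate, because the bookkeeping is never done. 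As it stands there is no argument to check for correctness.

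For comparison, the paper avoids the longest-cycle machinery altogether. It first disposes of the case $\delta(G)>\frac{n}{t+1}-1$ by the Bauer--Broersma--van den Heuvel--Veldman minimum-degree theorem, then splits into two cases according to whether some edge $uv$ has $|N_G(u)\cup N_G(v)|\le \frac{5n}{12}$. In the first case $N_G(u)\cup N_G(v)$ is a cutset with exactly two components, one of which is $(P_2\cup P_1)$-free; the low-degree part $G_1$ is shown to be $(P_2\cup P_1)$-free (hence $P_4$-free), covered by a system of paths via Jung's theorem and a $K_{1,2}$-star-matching, and the remainder $G_2^*$ is shown to satisfy $\kappa\ge |L|+\alpha$, so the H\"aggkvist--Thomassen theorem supplies a Hamilton cycle through the prescribed independent edges $L$, which is then expanded back into a Hamilton cycle of $G$. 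The second case makes the set of low-degree vertices independent and runs a similar absorption argument, finishing with a vertex-insertion lemma. If you wish to pursue your longest-cycle route, you would need to supply the full case analysis of Step (3); the paper's decomposition approach is an indication that this analysis was considered hard enough to be worth circumventing entirely.
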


We provide some preliminaries in the next section. The proof of Theorem~\ref{main} is given in the last section. 
\section{Preliminaries}

We begin this section with some definitions.
A \textit{star-matching} in a graph is the union of some vertex-disjoint copies of stars.
The  vertices of degree greater than one in a star-matching are called the \textit{centers} of the star matching.
For an integer $t \ge 1$, we call a star-matching $M$ a $K_{1,t}$-matching if every star in $M$ is isomorphic to $K_{1,t}$.
For a star-matching $M$, if $x,y \in V(M)$  with  $xy \in E(M)$ and $d_M(x)=1$,  then we say  that $x$  is  a   \textit{partner} of $y$ under $M$.

A path $P$ connecting two vertices $u$ and $v$ is called 
a {$(u,v)$-path}, and we write $uPv$ or $vPu$ in order to specify the two endvertices of 
$P$. 
Let $uPv$ and $xQy$ be two paths. If $vx$ is an edge, 
we write $uPvxQy$ as
the concatenation of $P$ and $Q$ through the edge $vx$.

The  first a few  lemmas  below provide sufficient conditions for the existence of a Hamilton cycle in  a graph. 

\begin{LEM}[\cite{D1952}]\label{dirac}
    Let $G$ be a graph on $n \ge 3$ vertices.
    If $\delta(G) \ge \frac{n}{2}$, then $G$ is Hamiltonian.
\end{LEM}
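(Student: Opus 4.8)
The plan is to use the classical longest-path (rotation) method. First I would record that the degree hypothesis forces $G$ to be connected: if $G$ had a component $H$ with $|V(H)| \le \frac{n}{2}$, then any $v \in V(H)$ would satisfy $d_G(v) \le |V(H)| - 1 < \frac{n}{2}$, contradicting $\delta(G) \ge \frac{n}{2}$. Next I would choose a longest path $P = x_0 x_1 \cdots x_\ell$ in $G$, so $P$ has $\ell + 1$ vertices and hence $\ell + 1 \le n$. By the maximality of $P$, every neighbor of $x_0$ must lie on $P$ (a neighbor off $P$ would extend $P$), and likewise every neighbor of $x_\ell$ lies on $P$. Therefore, writing $A = \{\,i \in \{1,\dots,\ell\} : x_0 \sim x_i\,\}$ and $B = \{\,i \in \{1,\dots,\ell\} : x_\ell \sim x_{i-1}\,\}$, we have $|A| = d_G(x_0) \ge \frac{n}{2}$ and $|B| = d_G(x_\ell) \ge \frac{n}{2}$, with both $A$ and $B$ contained in the $\ell$-element set $\{1,\dots,\ell\}$.

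The key step is a pigeonhole argument. Since $|A| + |B| \ge n > \ell$ while $A, B \subseteq \{1,\dots,\ell\}$, the two sets must meet: there is an index $j$ with $x_0 \sim x_j$ and $x_\ell \sim x_{j-1}$. Using these two chords I would form the cycle
$$
C = x_0 \, x_1 \cdots x_{j-1} \, x_\ell \, x_{\ell-1} \cdots x_j \, x_0,
$$
which traverses the portion $x_0,\dots,x_{j-1}$ forward, jumps to $x_\ell$ along the edge $x_{j-1} x_\ell$, runs back down to $x_j$, and closes along the edge $x_j x_0$. By construction $C$ is a cycle whose vertex set is exactly $V(P)$ (the degenerate case $j = \ell$ simply closes $P$ directly via $x_0 x_\ell$).

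Finally I would upgrade $C$ to a Hamilton cycle. If some vertex $w \notin V(C)$ existed, then by connectivity $w$ would have a neighbor $x_i \in V(C)$; deleting one of the two edges of $C$ at $x_i$ turns $C$ into a spanning path of $V(P)$ with $x_i$ as an endpoint, and prepending $w$ yields a path on $\ell + 2$ vertices, contradicting the maximality of $P$. Hence $V(C) = V(G)$ and $C$ is a Hamilton cycle. The one genuinely load-bearing point—and the step I expect to require the most care—is the pigeonhole argument: it works only because both endpoints of a \emph{longest} path have all of their neighbors trapped on the path, so that $|A|, |B| \ge \frac{n}{2}$ can be compared against the index set of size $\ell < n$. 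The connectivity observation and the final extension argument are routine by comparison.
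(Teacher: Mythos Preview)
Your proof is correct and is the classical longest-path (rotation) argument for Dirac's theorem. The paper does not supply its own proof of this lemma: it is stated with a citation to Dirac's 1952 paper and used as a black box, so there is nothing to compare against. One small imprecision in your final step: connectivity does not guarantee that an arbitrary $w \notin V(C)$ is itself adjacent to $V(C)$, only that \emph{some} vertex off $C$ is; choosing $w$ to be such a vertex makes the extension contradiction go through exactly as you wrote.
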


\begin{LEM}[\cite{MR1336668}]\label{bauer_min_degree}
	Let $t>0$ be a real number and $G$ a $t$-tough graph on $n \ge 3$ vertices with $\delta(G) > \frac{n}{t+1}-1$.
	Then $G$ is Hamiltonian.
\end{LEM}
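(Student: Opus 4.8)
The plan is to combine two numerical consequences of the hypotheses with a longest-cycle argument. First I would record the elementary toughness bound on the independence number: for any independent set $W$ with $|W|\ge 2$, every vertex of $W$ is isolated in $G-N_G(W)$ and $W\cap N_G(W)=\emptyset$, so $c(G-N_G(W))\ge |W|\ge 2$ and hence $t\le\tau(G)\le \frac{|N_G(W)|}{c(G-N_G(W))}\le \frac{|N_G(W)|}{|W|}$. This gives $|N_G(W)|\ge t|W|$, and since $n\ge |N_G(W)|+|W|$ we obtain $\alpha(G)\le \frac{n}{t+1}$. Together with $\delta(G)>\frac{n}{t+1}-1$ this yields the clean inequality $\alpha(G)\le \frac{n}{t+1}<\delta(G)+1$, i.e. $\alpha(G)\le\delta(G)$. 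I would also note that a non-complete $t$-tough graph has $\kappa(G)\ge 2t$, since any cutset $S$ satisfies $|S|\ge t\,c(G-S)\ge 2t$; the complete case is immediate from Lemma~\ref{dirac}. So assume $G$ is not complete and, for contradiction, not Hamiltonian.

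Next I would fix a longest cycle $C$, oriented, and put $R=V(G)\setminus V(C)\ne\emptyset$. Choose a component $H$ of $G[R]$ and let $A=N_C(H)=\{a_1,\dots,a_k\}$ in cyclic order; since $A$ separates $H$ from $V(G)\setminus(V(H)\cup A)$, we have $k\ge\kappa(G)\ge 2t\ge 2$. Writing $a^+$ for the successor of $a$ on $C$, the standard maximality lemma for longest cycles gives that $A^+=\{a_1^+,\dots,a_k^+\}$ is independent and that no vertex of $A^+$ has a neighbor in $V(H)$ (otherwise a reroute through $H$ produces a longer cycle). Consequently, for any independent set $J\subseteq V(H)$ the union $A^+\cup J$ is independent, so $\alpha(G)\ge k+\alpha(H)$. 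If $k+\alpha(H)\ge\delta+1$ — in particular whenever $k\ge\delta$, or whenever $H$ has a large independent set — this contradicts $\alpha(G)\le\delta$, and we are done.

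The remaining, genuinely hard, case is when $H$ is dense and sparsely attached: $k<\delta$ and $\alpha(H)\le\delta-k$. Here I would switch from an independence estimate to a length estimate in the Häggkvist–Nicoghossian spirit. Every $h\in H$ satisfies $N_G(h)\subseteq V(H)\cup A$, so $d_H(h)\ge\delta-k$; thus $H$ has minimum degree at least $\delta-k$ and therefore contains a path on at least $\delta-k+1$ vertices. Using maximality of $C$, inserting such a path (run between a neighbor of $a_i$ and a neighbor of $a_{i+1}$) in place of the cycle-arc strictly between two consecutive attachments cannot lengthen $C$; this forces each of the $k$ arcs between consecutive attachments to contain at least $\delta-k+1$ internal vertices, so that $|V(C)|\ge k(\delta-k+1)$. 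Then $n\ge |V(C)|+|V(H)|$ together with $k\ge 2t$ should exceed $(t+1)(\delta+1)$, contradicting the degree hypothesis.

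The main obstacle is exactly this last quantitative step. Two technical points need care: first, one must guarantee that a long path of $H$ can be realized with its two endpoints adjacent to a prescribed consecutive pair $a_i,a_{i+1}$ (rather than merely existing inside $H$), which is the delicate part of the insertion/rotation machinery; and second, the resulting inequality $k(\delta-k+1)+|V(H)|\ge (t+1)(\delta+1)$ must be verified uniformly across the regime $2t\le k<\delta$, including the boundary cases where $\delta$ is small relative to $t$. Making the constants close here — rather than in the easy independence estimate of the previous paragraph — is where the real content of the lemma lies, and it is presumably what dictates the precise form $\delta>\frac{n}{t+1}-1$ of the hypothesis.
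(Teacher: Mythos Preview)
The paper does not prove Lemma~\ref{bauer_min_degree}; it is quoted from \cite{MR1336668} and used as a black box. There is thus no in-paper proof to compare against, and I can only comment on your argument on its own merits.

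Your preliminary observations are correct: $\alpha(G)\le n/(t+1)$, $\kappa(G)\ge 2t$ for noncomplete $G$, and hence $\alpha(G)\le\delta(G)$. In the longest-cycle setup the set $A^{+}\cup J$ is indeed independent for any independent $J\subseteq V(H)$, so $\alpha(G)\ge k+\alpha(H)$, and this legitimately disposes of the range $k+\alpha(H)\ge\delta+1$.

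The gap you flag in the remaining case is real, and it is a genuine failure rather than a technicality to be patched. Even granting the insertion step, the inequality you aim for fails near the top of the range $2t\le k\le\delta-1$. Take $k=\delta-1$: then $\alpha(H)\le 1$, so $H$ is a clique, and your estimates give at best $|V(C)|\ge k(\delta-k+2)=3(\delta-1)$ together with $|V(H)|\ge \delta-k+1=2$, whence $n\ge 3\delta-1$. But $(t+1)(\delta+1)\ge 3\delta+3$ already for $t\ge 2$, so no contradiction arises; the arc-length bound is too weak by a factor of roughly $(t+1)/3$ in this regime, and no routine sharpening of the insertion lemma recovers it. The argument in \cite{MR1336668} is organized differently: it also starts from a longest cycle, but couples a refined analysis of which off-cycle vertices are \emph{insertible} with a direct application of the toughness inequality $|S|\ge t\,c(G-S)$ to a carefully constructed cutset $S\subseteq V(C)$, rather than using only the corollary $\kappa(G)\ge 2t$. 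That direct use of toughness is what produces the exact threshold $\delta>n/(t+1)-1$.
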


The result below says that a cycle  $C$ of a graph $G$ can be extended to include vertices that have many neighbors from $V(C)$
in terms of a function involving the  toughness of $G$. 

\begin{LEM}[{\cite[Lemma 2.16]{shan2025p4unionp1}}]\label{lem:vertex-insetting}
	Let $t>0$ and $G$ be a $t$-tough  $n$-vertex  graph with a non-Hamiltonian cycle  $C$. For a connected subgraph  $H$ of $G-V(C)$, if 
	$|N_G(V(H))\cap V(C)|> \frac{n}{t+1}-1$, then we can extend $C$ to a cycle $C^*$ 
	such that  $V(C) \subseteq V(C^*)$ and $V(C^*)\cap V(H) \ne \emptyset$. 
\end{LEM}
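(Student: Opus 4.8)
The plan is to prove Lemma~\ref{lem:vertex-insetting} by a standard rotation/extension argument on the cycle $C$, exploiting the toughness bound to guarantee that we cannot get ``stuck'' before reaching $V(H)$. First I would fix a connected subgraph $H$ of $G - V(C)$ with $|N_G(V(H)) \cap V(C)| > \frac{n}{t+1} - 1$, and orient $C$ so that ``successor'' and ``predecessor'' along $C$ are well defined. The key quantitative input is the classical observation (going back to the Bauer--Broersma--Veldman style of argument, and implicit in the proof of Lemma~\ref{bauer_min_degree}) that in a $t$-tough graph, a set $S$ of ``bad'' vertices on $C$ that blocks all insertions must satisfy $|S| < \frac{n}{t+1}$ roughly speaking: if $W \subseteq V(C)$ is such that the successors (or predecessors) of $W$ along $C$ form an independent set with no edges to the rest of a suitable fragment, then deleting an appropriate separator shows $\tau(G) < \frac{|W|}{\text{(number of resulting components)}}$.

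Concretely, the approach is: suppose for contradiction that $C$ cannot be extended to any cycle $C^*$ with $V(C) \subseteq V(C^*)$ and $V(C^*) \cap V(H) \neq \emptyset$. Let $A = N_G(V(H)) \cap V(C)$, so $|A| > \frac{n}{t+1} - 1$, i.e. $|A| \geq \lceil \frac{n}{t+1} \rceil$ in integer terms, and in any case $|A| \geq \frac{n}{t+1} - 1 + \epsilon$ for the relevant inequalities. Pick a vertex $h \in V(H)$ adjacent to some $a \in A$. The non-extendability means that for every pair of consecutive-along-$C$ vertices that are both usable as ``docking points'' for a path through $H$, some obstruction occurs; the cleanest way to package this is: for each $a \in A$, the successor $a^+$ of $a$ on $C$ must be non-adjacent to $H$ and moreover $a^+$ must be non-adjacent to $b^+$ for all other $b \in A$ (otherwise one reroutes $C$ through $H$ picking up all of $V(C)$ and at least the vertex $h$, possibly using that $H$ is connected to absorb a short sub-path between two attachment points). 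I would set $S = A$ (or $A$ together with a bounded number of extra vertices needed to separate $H$ from the rest), and observe that $G - S$ then has the set $\{a^+ : a \in A\}$ splitting into many components — at least $|A|$ of them if those successors are pairwise non-adjacent and each is separated from $H$ — while also $V(H)$ lies in a component avoiding $S$. This yields $\tau(G) \leq \frac{|S|}{c(G-S)} \leq \frac{|A| + O(1)}{|A|}$, which is not yet a contradiction, so the real content is to instead delete a smaller separator: delete only a separator $S'$ of size about $\frac{n}{t+1} - 1$ that still isolates $|A|$ singletons, giving $\tau(G) < \frac{|S'|}{|A|+1} \le \frac{n/(t+1)-1}{n/(t+1)} \cdot \text{(something)} $, contradicting $\tau(G) \ge t$ after the arithmetic $t \cdot \frac{n}{t+1} > \frac{n}{t+1} - 1$ is checked.

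The main obstacle I anticipate is getting the separator bookkeeping exactly right: one must show that the obstruction set one deletes really does have size at most about $\frac{n}{t+1}-1$ while producing strictly more components than that — the delicate point is handling the interaction between the successors $a^+$ and the subgraph $H$ (since $H$ may itself attach to many of these successors, one has to either include $N_G(V(H))$ in the separator, which is too big, or argue that such attachments would themselves give an extension). The resolution is the familiar trick: because $H$ is connected, if two attachment vertices $a, b \in A$ have the property that the sub-path of $C$ between $a^+$ and $b$ (say) can be ``swallowed'' by a detour through $H$, we get an extended cycle; non-extendability forces the successors to be an independent set that is moreover non-adjacent to $V(H) \cup \{\text{other successors}\}$, and then $S = N_C$-part suffices. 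I would then just cite Lemma~\ref{bauer_min_degree}'s proof technique or redo the two-line toughness computation: with $c(G - S) \geq |A| \geq \frac{n}{t+1} - 1 + 1 > \frac{n}{t+1} - 1 \geq \frac{|S|}{t}$ when $|S| \le |A|$, wait — one must be careful that $|S|$ could be as large as $|A|$; the fix is that the singleton components contribute and $H$'s component is extra, so $c(G-S) \ge |A|+1 > |A| \ge |S|/t$ precisely when $t > |S|/(|A|+1)$, and since the relevant $S$ has $|S|$ bounded by (number of attachment points) which is $\le$ something forcing $|S| \le t(|A|+1)$ to fail. So in the write-up I would carefully choose $S$ to be exactly the set of predecessors-or-attachment points making this clean, note $V(C) \setminus (\{a^+\} \cup S)$ still connects via $C$-arcs into the $H$-component, and conclude the toughness contradiction, which finishes the proof.
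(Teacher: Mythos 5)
There is a genuine gap, and it is in the one place where the lemma's hypothesis actually gets used: the toughness computation. Your preliminary claims are the right ones — writing $A=N_G(V(H))\cap V(C)$, non-extendability forces (i) no two vertices of $A$ to be consecutive on $C$ (else splice a path through the connected graph $H$ into that edge of $C$), hence $A^+\cap A=\emptyset$ and no $a^+$ is adjacent to $V(H)$, and (ii) $A^+=\{a^+:a\in A\}$ to be independent (else the crossing re-routing through $H$ via the edge $a^+b^+$ extends $C$). But you then try to take the deleted set $S$ to be $A$ itself, or some hypothetical separator of size about $\frac{n}{t+1}-1$, and claim this leaves $|A|$ singleton components. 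That is false: deleting $A$ does not isolate the successors, since each $a^+$ remains joined to the rest of its arc of $C-A$ and possibly to many vertices off $C$; pairwise non-adjacency of $A^+$ says nothing about $c(G-A)$, which can perfectly well equal $1$. Your own arithmetic at the end reflects this — the chain of inequalities never closes, and no separator of size $\approx\frac{n}{t+1}-1$ producing $|A|$ components is shown to exist (none need exist).

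The correct move is the opposite of what you attempt: delete the \emph{large} set $S=V(G)\setminus\bigl(A^+\cup V(H)\bigr)$. By (i) and (ii), $G-S=G[A^+\cup V(H)]$ consists of the $|A|$ isolated vertices of $A^+$ together with the connected graph $H$, so $c(G-S)=|A|+1\ge 2$, while $|S|\le n-|A|-1$. Toughness then gives $t\le\frac{n-|A|-1}{|A|+1}$, i.e.\ $(t+1)(|A|+1)\le n$, i.e.\ $|A|\le\frac{n}{t+1}-1$, contradicting the hypothesis $|A|>\frac{n}{t+1}-1$. (The present paper only cites this lemma from \cite{shan2025p4unionp1}, but this is the intended argument.) So the structural rotation/extension setup in your proposal is sound; what is missing is the realization that the toughness bound is applied with the complement of $A^+\cup V(H)$ as the cutset, which is exactly what converts the count $|A|$ of attachment points into the threshold $\frac{n}{t+1}-1$.
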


Let $G$ be a graph. 
Two edges of $G$ are \emph{independent} if they do not share any endvertices.    The connectivity 
of  $G$, denoted $\kappa(G)$, is the size of a minimum cutset of $G$ if $G$ is noncomplete, and 
 is defined as $ |V(G)|-1$  if $G$ is complete. 
The size of a maximum independent set in $G$  is called the \emph{independence number} of $G$, and is denoted by $\alpha(G)$. 
We need the following 1982-result by H\"{a}ggkvist and  Thomassen. 

\begin{LEM}[\cite{Cycles-through-edges}]\label{haggkvist_cycles}
	Let $G$ be a graph and $L\subseteq E(G)$ be a set of independent edges. 
	If  $ \kappa(G) \ge |L|+\alpha(G)$,  then $G$ has  a Hamilton cycle  containing every edge of $L$.
\end{LEM}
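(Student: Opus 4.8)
The plan is to prove this by adapting the Chv\'atal--Erd\H{o}s argument (the case $L=\emptyset$) so that every cycle modification preserves the prescribed edges in $L$. First I would dispose of the degenerate cases: if $G$ is complete the claim is immediate (the at most $\lfloor |V(G)|/2\rfloor$ independent edges of $L$ can be ordered along a Hamilton cycle), and if $L=\emptyset$ the statement is exactly Chv\'atal--Erd\H{o}s. So assume $G$ is noncomplete, write $\kappa=\kappa(G)$, $\alpha=\alpha(G)$, and note $\kappa\ge|L|+\alpha\ge 2$. Since $\alpha\ge 1$ we have $|L|\le\kappa-1$, so a standard connectivity fact (any at most $\kappa-1$ independent edges of a $\kappa$-connected graph lie on a common cycle, provable from Menger's theorem and the fan lemma) yields a cycle containing every edge of $L$. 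Among all such cycles choose one, $C$, of maximum length, fix an orientation of $C$, and write $v^+$ for the successor of $v\in V(C)$.

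Suppose for contradiction that $C$ is not a Hamilton cycle, let $H$ be a component of $G-V(C)$, and pick $x\in V(H)$. Put $A=N_G(V(H))\cap V(C)$; since $H$ is a full component of $G-V(C)$ we have $N_G(V(H))=A$. I would first verify that $A$ is a genuine cutset, giving $|A|\ge\kappa$. If instead $V(G)=V(H)\cup A$, then $V(C)=A$, so for any edge $vv^+$ of $C$ with $vv^+\notin L$ both ends lie in $A=N_G(V(H))$, and replacing $vv^+$ by a path through the connected graph $H$ from $v$ to $v^+$ produces a longer cycle that still contains all of $L$ (only the non-$L$ edge $vv^+$ was removed), contradicting maximality; the only alternative is that every edge of $C$ lies in $L$, which is impossible because consecutive edges of a cycle share a vertex while $L$ is independent. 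Hence $A$ is a cutset and $|A|\ge\kappa\ge|L|+\alpha$.

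Now call $v\in A$ \emph{good} if $vv^+\notin L$, and set $S=\{v^+ : v\in A,\ vv^+\notin L\}$. Every edge of $L$ lies on $C$ and, under the fixed orientation, is the out-edge $vv^+$ of at most one $v\in A$, so there are at most $|L|$ non-good vertices of $A$; since $v\mapsto v^+$ is injective, $|S|\ge|A|-|L|\ge\alpha$. I claim $S\cup\{x\}$ is independent of size at least $\alpha+1$, the desired contradiction. For good $v$ the single-edge detour above forces $v^+\notin A$ (otherwise $v,v^+\in A$ with $vv^+\notin L$ gives a longer $L$-cycle), so $v^+$ has no neighbour in $H$ and in particular $v^+\nsim x$. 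Finally, for distinct good $v,w$ I would run the standard crossing argument assuming $v^+\sim w^+$: combining the chord $v^+w^+$, a $v$--$w$ path through the connected subgraph $H$ (which exists since $v,w\in A=N_G(V(H))$), and the two arcs of $C$ between them with one reversed, yields a cycle covering all of $V(C)$ plus at least one vertex of $H$, hence longer than $C$. Crucially, this reroute deletes from $C$ exactly the two edges $vv^+$ and $ww^+$, both non-$L$ because $v,w$ are good, while every edge of $L$ lies on the two preserved arcs; so the new cycle still contains all of $L$, contradicting maximality. Therefore $v^+\nsim w^+$, $S\cup\{x\}$ is independent, and $|S\cup\{x\}|\ge\alpha+1>\alpha(G)$, a contradiction.

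The hypothesis $\kappa\ge|L|+\alpha$ is used exactly once and tightly: it is precisely what supplies at least $\alpha$ good successors to adjoin to the extra vertex $x\in V(H)$. I expect the main obstacle to be bookkeeping rather than one hard idea, namely ensuring that every cycle surgery (the single-edge detour used to show $A$ is a cutset and $v^+\notin A$, and the two-arc crossing exchange) is set up to delete only non-$L$ edges, so that each ``longer cycle through $L$'' contradiction is legitimate. The secondary point needing care is the existence of the initial cycle through all of $L$: if a self-contained argument is preferred, this can be folded into the extremal choice by taking $C$ to maximize first $|E(C)\cap L|$ and then its length and showing any omitted edge of $L$ can be absorbed by a fan/Menger rerouting, but invoking the standard ``at most $\kappa-1$ independent edges lie on a common cycle'' result is cleaner.
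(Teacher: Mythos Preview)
The paper does not prove this lemma at all; it is quoted as a known result of H\"aggkvist and Thomassen and used as a black box, so there is no in-paper argument to compare against. Your proposal is a correct self-contained proof: the Chv\'atal--Erd\H{o}s adaptation you describe---take a longest cycle through $L$, bound the number of ``good'' successors in $A=N_G(V(H))\cap V(C)$ by $|A|-|L|\ge\alpha$, and show $S\cup\{x\}$ is independent via detours that delete only the non-$L$ edges $vv^+$ and $ww^+$---is exactly the approach of the original H\"aggkvist--Thomassen paper. The only step you invoke rather than prove is the existence of an initial cycle through all of $L$ when $|L|\le\kappa-1$; your suggested workaround (maximize $|E(C)\cap L|$ first, then length) is the cleanest way to make the argument fully self-contained, and avoids having to cite a separate result that is itself close in strength to what you are proving.
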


Let  $G$ be a graph and $S$ be a cutset of $G$.
A component of $G-S$ is  \textit{trivial} if it contains only one vertex; otherwise,  it is \textit{nontrivial}.
If  $G$ is  $(2P_2\cup P_1)$-free   and $G-S$ has a nontrivial component,  then every 
other component of $G-S$ is $(P_2\cup P_1)$-free.  Note  that if $G$ is $(P_2\cup P_1)$-free, then it is also $P_4$-free. 
We also need the following result by Jung from 1978 on $P_4$-free graphs, and some properties of $(P_2\cup P_1)$-free graphs. 
To state the result by Jung, we  define the scattering number of a graph.

Let $G$ be a graph. We call 
$$
s(G) =\max\{c(G-S)-|S|: S\subseteq V(G), c(G-S) \ge 2\} 
$$
the \emph{scattering number} of $G$ if $G$ is not complete; otherwise $s(G)=\infty$. A set $S\subseteq V(G)$ with $c(G-S)-|S|=s(G)$ and $c(G-S) \ge 2$ 
is called a \emph{scattering set} of $G$.

\begin{LEM}[\cite{P4-free-1-tough}]\label{jung}
	Let $G$ be a $P_4$-free graph. Then $G$ is Hamiltonian-connected if and only if $s(G) < 0$.
\end{LEM}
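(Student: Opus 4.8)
The plan is to prove the two implications separately; the forward direction (Hamiltonian-connected $\Rightarrow s(G)<0$) holds for \emph{every} graph, while the reverse direction is where the hypothesis that $G$ is $P_4$-free --- equivalently, a cograph --- is essential. For necessity, suppose $G$ is Hamiltonian-connected and let $S\subseteq V(G)$ be arbitrary with $c(G-S)\ge 2$. Then $G$ is connected, so $S\ne\emptyset$. If $|S|=1$, say $S=\{x\}$, then $x$ is a cut vertex; choosing $u,v$ in one component of $G-x$, any Hamilton $(u,v)$-path would have to enter and leave each of the $\ge 1$ other components through $x$ alone, which is impossible, a contradiction. Hence $|S|\ge 2$. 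Pick distinct $u,v\in S$ and a Hamilton $(u,v)$-path $P$. Deleting the $|S|$ vertices of $S$ from $P$ --- two of which are the endvertices $u,v$ --- splits $P$ into at most $|S|-1$ nonempty subpaths, each contained in a single component of $G-S$; as every component meets at least one subpath, $c(G-S)\le |S|-1$, i.e. $c(G-S)-|S|\le -1$. Maximizing over all such $S$ gives $s(G)\le -1<0$.

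For sufficiency, assume $G$ is a cograph with $s(G)<0$. Then $G$ is connected (a disconnected graph has $s\ge 2$ via $S=\emptyset$) and non-complete, so $\overline{G}$ is disconnected and $G$ decomposes as a join $G=H_1\vee\cdots\vee H_k$ with $k\ge 2$, where each co-component $H_i$ (a component of $\overline{G}$, on $n_i$ vertices) is either a single vertex or itself disconnected. First I would establish the recursion $s(G)=\max\{\,s(H_i)-(n-n_i): H_i\text{ disconnected}\,\}$: any $S$ with $c(G-S)\ge 2$ must leave all surviving vertices inside a single $H_i$, since vertices from two different co-components are adjacent through the join; thus $c(G-S)=c(H_i-S')$ with $S'=S\cap V(H_i)$ and $|S|\ge (n-n_i)+|S'|$, and optimizing over $S'$ yields $s(H_i)-(n-n_i)$. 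Consequently $s(G)<0$ forces $s(H_i)\le (n-n_i)-1$ for every disconnected co-component $H_i$.

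The Hamilton $(u,v)$-path is then built by interleaving. For each $i$ I cover $H_i$ by a system of vertex-disjoint paths (``blocks''); the least number of blocks needed is the path-cover number $\pi(H_i)=\max(1,s(H_i))$, and a cover may be freely refined into as many as $n_i$ blocks. Because all edges between distinct co-components are present, any two blocks lying in different $H_i$ can be concatenated; hence it suffices to line up all blocks in a sequence so that consecutive blocks belong to distinct co-components, with $u$ the free end of the first block and $v$ the free end of the last. Viewing each co-component as a colour, this is exactly the problem of arranging coloured blocks with no two equal colours adjacent and with prescribed end-colours and end-vertices. The most demanding instances are those in which both $u$ and $v$ avoid a given co-component $H_i$, so that all $\ge s(H_i)$ of its blocks must be separated internally; this is solvable precisely when the remaining co-components can be split into at least $s(H_i)+1$ blocks, i.e. when $n-n_i\ge s(H_i)+1$, which is exactly the inequality guaranteed by $s(H_i)\le (n-n_i)-1$. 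Thus $s(G)<0$ renders every required arrangement feasible and yields a Hamilton $(u,v)$-path for every pair $u,v$.

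The main obstacle is this last step: verifying that the coloured-block arrangement is feasible for every choice of endpoints. This requires the full balance condition for arranging colour-multiset items with two prescribed end-colours and no equal colours adjacent (not merely the single-colour bound sketched above), together with the bookkeeping that $u$ and $v$ can be realized as block-endvertices inside their co-components and that no co-component is forced to overfill; I would carry these through by induction on the cotree while maintaining $s(H_i)\le (n-n_i)-1$ at every node. I would also dispose of the degenerate cases separately: single-vertex co-components, which merely contribute freely placeable blocks, and complete graphs, which are Hamiltonian-connected outright and lie outside the scattering-number condition under the convention $s=\infty$.
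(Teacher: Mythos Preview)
The paper does not contain a proof of this lemma: it is quoted from Jung's 1978 paper \cite{P4-free-1-tough} and used as a black box, so there is no in-paper argument against which to compare your proposal.

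For what it is worth, your outline is the natural cograph approach. The forward direction is correct as written. In the reverse direction the join decomposition $G=H_1\vee\cdots\vee H_k$ and the recursion $s(G)=\max_i\{s(H_i)-(n-n_i)\}$ over the disconnected co-components are right, and the block-interleaving strategy is the standard one. The place that would need the most care is the assertion $\pi(H_i)=\max(1,s(H_i))$: this identity for $P_4$-free graphs is itself a theorem of the same strength as the traceable case of Jung's result, so it cannot simply be invoked but must be proved simultaneously in your cotree induction; otherwise the argument is close to circular. You also correctly note that under the paper's convention $s(G)=\infty$ for complete $G$ the biconditional is literally false for complete graphs, which must be handled as a separate trivial case (the intended convention is presumably $s(K_n)=-\infty$).
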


\begin{LEM}\label{propertiesofP2cupP1freegraphs}
	Let $G$ be a   $(P_2 \cup P_1)$-free graph.  Then the following statements hold. 
	
	\begin{enumerate}
		\item  If $S$ is a cutset of $G$, then every component of $G-S$ is trivial.
		
		\item If  $S$ is a minimal cutset of $G$, then  $G[S,V(G)\setminus S]$ is a complete bipartite graph.
		
		\item $\kappa(G)=\delta(G)$. 
		\item $\delta(G) \ge |V(G)|-\alpha(G)$. 
	\end{enumerate}
\end{LEM}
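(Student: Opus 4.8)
The plan is to isolate one elementary structural fact about $(P_2\cup P_1)$-free graphs and then deduce all four statements from it. The fact is: since $P_2\cup P_1$ is an edge together with one extra vertex adjacent to neither of its endvertices, $G$ is $(P_2\cup P_1)$-free if and only if for every edge $uv\in E(G)$ and every vertex $w\in V(G)\setminus\{u,v\}$ we have $w\sim u$ or $w\sim v$. I would state and prove this first; it follows immediately from the meaning of ``induced subgraph.''

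For statement (1): if $S$ is a cutset and some component $C$ of $G-S$ contains an edge $uv$, then, using $c(G-S)\ge 2$, pick a vertex $w$ in a component of $G-S$ other than $C$; since $u,v,w\in V(G)\setminus S$ lie in components that keep $w$ apart from $u$ and from $v$, we get $u\nsim w$ and $v\nsim w$, contradicting the fact above. Hence every component of $G-S$ is trivial. For statement (2): by (1) we may write $V(G)\setminus S=\{v_1,\dots,v_k\}$ with $k\ge 2$ and the $v_i$ pairwise nonadjacent. Fix $x\in S$; minimality of $S$ means $S\setminus\{x\}$ is not a cutset, so $G-(S\setminus\{x\})$, whose vertex set is $\{x,v_1,\dots,v_k\}$, is connected. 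If some $v_i$ were nonadjacent to $x$ it would be isolated in that graph (it has no neighbor among the remaining $v_j$ either), a contradiction; hence $x$ is adjacent to every vertex of $V(G)\setminus S$, and as $x\in S$ was arbitrary, $G[S,V(G)\setminus S]$ is complete bipartite.

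For statement (3): $\kappa(G)\le \delta(G)$ always holds, and both sides equal $|V(G)|-1$ when $G$ is complete. If $G$ is not complete, take a minimum cutset $S$, so $|S|=\kappa(G)$; a minimum cutset is minimal, so (1) and (2) apply. Any $v\in V(G)\setminus S$ satisfies $N_G(v)\subseteq S$ because its component in $G-S$ is $\{v\}$, while $S\subseteq N_G(v)$ by (2); thus $d_G(v)=|S|=\kappa(G)$, giving $\delta(G)\le\kappa(G)$, hence equality. For statement (4): let $v$ attain $\delta(G)$ and set $A=V(G)\setminus N_G(v)$, so $v\in A$ and $|A|=|V(G)|-\delta(G)$. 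If $A$ contained an edge $xy$, then $v\notin\{x,y\}$ (for if say $v=x$ then $y\in A$ forces $xy\notin E(G)$) and $v$ is nonadjacent to both $x$ and $y$, contradicting the fact above; hence $A$ is an independent set and $\alpha(G)\ge|V(G)|-\delta(G)$, i.e.\ $\delta(G)\ge|V(G)|-\alpha(G)$.

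I do not expect a genuine obstacle here: everything is short once the structural fact is in place. The only points needing a little care are, in (2), invoking the right consequence of ``minimal cutset'' (that $G-(S\setminus\{x\})$ is connected); in (3), noting that a minimum cutset is minimal so that (2) may be applied; and in (4), observing that the minimum-degree vertex $v$ itself belongs to the independent set $A$ being exhibited.
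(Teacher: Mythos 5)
Your proof is correct and follows essentially the same route as the paper: (1) and (2) via the observation that every vertex outside an edge must see one of its endvertices, and (3) by combining (1) and (2) with a minimum cutset. The only (cosmetic) difference is in (4), where you exhibit the independent set $V(G)\setminus N_G(v)$ for a minimum-degree vertex $v$ directly, while the paper takes the trivial components of $G-W$ for a minimum cutset $W$ and invokes (3); both are valid one-line arguments.
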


\proof Statement (1) is trivial by the $(P_2 \cup P_1)$-freeness of $G$. 

For (2),  suppose  otherwise that there exist $x \in S$ and $y \in V(G)\setminus S$ such that $x \nsim y$.   
Then by (1),  $y$  is the vertex of a trivial component of $G-S$. Thus $S\setminus\{x\}$ 
is also a cutset of $G$, a contradiction to $S$ being a minimal cutset of $G$.

For (3), we already have $\kappa(G) = \delta(G)$ if $G$ is a complete graph.   Thus we assume that $G$ is noncomplete. Let $W$ be a minimum cutset of  $G$. Then by (1) and (2), we know that    there is a vertex $v\in V(G)$ with $d(v)=|W|$. Thus  $\delta(G) \le  d(v) =|W|$. As $|W| =\kappa(G)  \le \delta(G)$, it follows that $\kappa(G) =\delta(G)$. 

For (4),  the assertion is clear if $G$ is a complete graph.   Thus we assume that $G$ is noncomplete. Let $W$ be a minimum cutset of  $G$. 
Then we have  that $|W| =\delta(G)$ by (3), and  that each component of $G-W$ is trivial by (1). Thus $\alpha(G) \ge |V(G)|-|W| =|V(G)|-\delta(G)$. 
As a consequence,  $\delta(G) \ge |V(G)|-\alpha(G)$. 
\qed

Lastly, we need the following results on the existence of star-matchings in graphs.  The first one is a generalization of Hall's matching theorem.

\begin{LEM}[{\cite[Theorem 2.10]{AK2011}} ]\label{akiyama_and_kano}
    Let $G$ be a bipartite graph with partite sets $X$ and $Y$, and let $f$ be a function from $X$ to the set of positive integers.
  For every $S \subseteq X$,  if $|N_G(S)| \ge \sum_{v \in S} f(v)$, then $G$ has a subgraph $H$ such that $X \subseteq V(H)$, $d_H(v) = f(v)$ for every $v \in X$, and $d_H(u) = 1$ for every $u \in Y \cap V(H)$.
\end{LEM}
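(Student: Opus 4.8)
The plan is to reduce the statement to the classical Hall marriage theorem by ``splitting'' each vertex of $X$ according to the demand function $f$. Concretely, I would build an auxiliary bipartite graph $G'$ with partite sets $X'$ and $Y$ as follows: for each $v \in X$ replace $v$ by $f(v)$ distinct copies $v^{(1)},\dots,v^{(f(v))}$, set $X' = \bigcup_{v\in X}\{v^{(1)},\dots,v^{(f(v))}\}$, and join each copy $v^{(i)}$ to exactly the vertices of $N_G(v) \subseteq Y$. Since $f$ takes positive integer values, every $v \in X$ gives rise to at least one copy. A matching in $G'$ that saturates $X'$ then corresponds precisely to the desired subgraph $H$: reading each matched pair $v^{(i)}y$ back as the edge $vy$ of $G$, each $v \in X$ acquires exactly $f(v)$ distinct neighbours in $Y$ (so $d_H(v)=f(v)$), while each $y\in Y$ is used at most once (so $d_H(u)=1$ for every matched $u$), and $X\subseteq V(H)$.

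Next I would verify that Hall's condition holds in $G'$, that is, $|N_{G'}(S')|\ge |S'|$ for every $S'\subseteq X'$. Given such an $S'$, let $S=\{v\in X: \text{some copy of } v \text{ lies in } S'\}$ be the set of original vertices represented in $S'$. By construction the neighbourhood of any copy of $v$ is $N_G(v)$, so $N_{G'}(S') = N_G(S)$, and since $S'$ contains at most $f(v)$ copies of each $v\in S$ we have $|S'|\le \sum_{v\in S} f(v)$. The hypothesis of the lemma applied to $S$ gives $|N_G(S)| \ge \sum_{v\in S} f(v)$, and chaining these inequalities yields $|N_{G'}(S')| = |N_G(S)| \ge \sum_{v\in S} f(v) \ge |S'|$, as required. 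Hall's theorem then guarantees a matching of $G'$ that saturates $X'$.

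Finally I would translate this matching back to $G$ exactly as described in the first paragraph to obtain $H$, and confirm the three required degree and containment conditions directly from the saturation property. The argument has no genuinely hard step; the one point that must be handled with care is the transfer of the counting inequality, where it is essential that distinct copies of the same vertex share the same neighbourhood (so that $N_{G'}(S')$ collapses to $N_G(S)$) while still contributing separately to $|S'|$ (so that the $f(v)$ terms appear on both sides). I expect this bookkeeping, rather than any structural difficulty, to be the only place where a slip could occur.
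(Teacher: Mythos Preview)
Your argument is correct: the vertex-splitting reduction to Hall's theorem is the standard proof of this result, and your verification of Hall's condition in the auxiliary graph $G'$ is carried out carefully. The only remark is that the paper does not actually prove this lemma---it is quoted without proof from Akiyama and Kano's monograph (their Theorem~2.10)---so there is no ``paper's own proof'' to compare against. Your write-up would serve perfectly well as a self-contained justification, and the approach you take is exactly the one found in that reference.
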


\begin{LEM}\label{lem:star-matching}
	Let $t\ge 1$  be a real number  and $G$ be a $t$-tough noncomplete graph. Then for any independent set $X$ of $G$, there is a $k_{1, \lfloor t \rfloor}$-matching with centers  precisely 
	as vertices of $X$. 
\end{LEM}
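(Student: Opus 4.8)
The plan is to translate the statement into a degree-constrained bipartite subgraph problem and then apply Lemma~\ref{akiyama_and_kano}. Set $q=\lfloor t\rfloor$, noting $q\ge 1$ because $t\ge 1$, and let $B=G[X,\,V(G)\setminus X]$. Since $X$ is independent, every neighbour (in $G$) of a vertex of $X$ lies outside $X$, so for every $S\subseteq X$ we have $N_B(S)=N_G(S)$. I would apply Lemma~\ref{akiyama_and_kano} to $B$ with the constant function $f\equiv q$ on $X$; it then gives a subgraph $H$ with $X\subseteq V(H)$, $d_H(x)=q$ for every $x\in X$, and $d_H(u)=1$ for every $u\in(V(G)\setminus X)\cap V(H)$. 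Such an $H$ is exactly a union of vertex-disjoint copies of $K_{1,q}$: each non-$X$ vertex of $H$ has degree $1$ and hence belongs to a unique star, while each $x\in X$ has degree $q$ and is the centre of its star. Thus $H$ is a $K_{1,q}$-matching whose set of centres is precisely $X$, and the whole task reduces to checking that $|N_G(S)|\ge q\,|S|$ for every nonempty $S\subseteq X$.

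For $|S|\ge 2$ I would read $N_G(S)$ as a separator. Writing $T=N_G(S)$, independence of $X$ forces every vertex of $S$ to have all of its neighbours in $T$, so the $|S|$ vertices of $S$ are $|S|$ distinct isolated vertices of $G-T$; in particular $c(G-T)\ge|S|\ge 2$. By the definition of toughness, $|N_G(S)|=|T|\ge\tau(G)\cdot c(G-T)\ge t\,|S|\ge q\,|S|$, as needed.

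The case $|S|=1$ is not handled by toughness directly, but by its minimum-degree consequence. Writing $S=\{x\}$, it suffices to show $d_G(x)\ge q$. Taking a minimum cutset $W$ of $G$ (which exists since $G$ is noncomplete) we get $c(G-W)\ge 2$, hence $t\le\tau(G)\le|W|/c(G-W)\le|W|/2$, i.e. $\kappa(G)\ge 2t$; therefore $d_G(x)\ge\delta(G)\ge\kappa(G)\ge 2t\ge q$. Combining the two cases, Lemma~\ref{akiyama_and_kano} applies and produces the desired star-matching.

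I do not expect a genuine obstacle here: the argument is essentially the observation that $N_G(S)$ must separate the independent set $S$ into singletons, so toughness lower-bounds $|N_G(S)|$, together with the routine fact that $t$-toughness forces $\delta(G)\ge\kappa(G)\ge 2t$, which covers $|S|=1$. The only delicate point is bookkeeping when $q=1$ (that is, $1\le t<2$): then a $K_{1,1}$-matching is a matching saturating $X$, the word ``centre'' is understood by convention, and the argument above goes through unchanged.
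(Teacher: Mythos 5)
Your proof is correct and follows essentially the same route as the paper: both reduce the statement to Lemma~\ref{akiyama_and_kano} on the bipartite graph $G[X, V(G)\setminus X]$ with $f\equiv\lfloor t\rfloor$, verify the neighborhood condition for $|S|\ge 2$ by observing that $N_G(S)$ is a cutset isolating the vertices of $S$ so that toughness gives $|N_G(S)|\ge t|S|$, and handle $|S|=1$ via $\delta(G)\ge 2t$. You merely spell out a few steps the paper leaves implicit.
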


\proof  Let $H=G[X,V(G)\setminus X]$ and  $S\subseteq X$ be nonempty.  If $|S|=1$, then we have $|N_H(S)| \ge \delta(G) \ge 2t$. 
Thus we assume that $|S| \ge 2$, and so $N_H(S)$ is a cutset  of  $G$. 
Since $G$ is $t$-tough, we have $|N_H(S)| =|N_G(S)| \ge t|S|$. By Lemma~\ref{akiyama_and_kano}, 
$G$ has a $k_{1, \lfloor t \rfloor}$-matching with centers  precisely 
as vertices of $X$. 
\qed

\section{Proof of Theorem~\ref{main}}

\begin{proof}[Proof of Theorem~\ref{main}]

Let $t \ge 11$ and $G$ be a $t$-tough $(2P_2 \cup P_1)$-free graph on $n\ge 3$ vertices. By Lemma~\ref{bauer_min_degree}, we may assume that $\delta(G) \le \frac{n}{t+1} -1$.  Thus $G$ is not a complete graph  and  $\delta(G) \ge 2t\ge 22$. 
Additionally, by the toughness of $G$, we have  $\alpha(G) \le \frac{n}{t+1}$. 
We consider two cases in  completing the proof.   Roughly,  in both cases,  we split the graph into two parts $R_1$ and $R_2$, where $R_1$
consists of vertices of small degrees in $G$. 
We   ``warp'' vertices of $R_1$  using a union  $\mathcal{Q}$ of  vertex-disjoint  paths with endvertices from $R_2$. Then we construct $R_2^*$ by 
adding a set  $L$ of independent edges  whose endvertices are precisely the endvertices of  the components of $\mathcal{Q}$.  
The graph $R_2^*$ will be shown to have high connectivity. 
Applying Lemma~\ref{haggkvist_cycles} on $R_2^*$, we find a Hamilton cycle $C^*$  of $R^*_2$ 
that contains all edges of $L$.  Now  replacing each edge  $xy$ of $L$ on $C^*$ by the path of $\mathcal{Q}$ with endvertices $x,y$ gives 
a Hamilton cycle of $G$. 

{\bf \noindent Case 1: There exists $uv \in E(G)$ for which $|N_G(u) \cup N_G(v)| \le \frac{5n}{12}$.}

\smallskip 

Let $S=(N_G(u) \cup N_G(v))\setminus \{u,v\}$.  Then  $|S| \le \frac{5n}{12}-2$ and $S$ is a cutset of $G$. Furthermore, we have the following claim. 

\begin{CLA}\label{sisacutset}
We have 	$c(G-S) = 2$. 
\end{CLA}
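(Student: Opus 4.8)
The plan is to first establish the easy direction $c(G-S)\ge 2$, and then rule out $c(G-S)\ge 3$ by combining a short structural observation with the toughness bound.

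For the lower bound: since $S=(N_G(u)\cup N_G(v))\setminus\{u,v\}$, every neighbor of $u$ or of $v$ lies in $S\cup\{u,v\}$, so in $G-S$ the only neighbor of $u$ is $v$ and the only neighbor of $v$ is $u$. Hence $\{u,v\}$ is a component of $G-S$ — call it $D_1$ — and it is nontrivial because $uv\in E(G)$. Since $|S|+2\le\frac{5n}{12}<n$, there is at least one vertex outside $S\cup\{u,v\}$, so $c(G-S)\ge 2$.

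Next I would assume for contradiction that $c(G-S)\ge 3$ and argue that every component of $G-S$ other than $D_1$ is trivial. Indeed, if some other component $D$ contained an edge $xy$, then picking any vertex $z$ lying in a third component of $G-S$, the five distinct vertices $\{u,v,x,y,z\}$ would induce $2P_2\cup P_1$ in $G$: the only edges among them are $uv$ and $xy$, since no edge runs between distinct components of $G-S$. This contradicts that $G$ is $(2P_2\cup P_1)$-free. Therefore all of the $c(G-S)-1\ge 2$ components distinct from $D_1$ are single vertices, which yields $n=|S|+2+(c(G-S)-1)$, i.e. $c(G-S)=n-|S|-1\ge n-(\tfrac{5n}{12}-2)-1=\tfrac{7n}{12}+1$.

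Finally, $S$ is a cutset with $c(G-S)\ge 2$, so by the definition of toughness $11\le t\le\tau(G)\le\frac{|S|}{c(G-S)}\le\frac{5n/12-2}{7n/12+1}<\frac{5}{7}$, a contradiction; hence $c(G-S)=2$. The only step carrying any content is the structural claim forcing all components but $D_1$ to be singletons, and even that is immediate from $(2P_2\cup P_1)$-freeness, so I do not anticipate a genuine obstacle here.
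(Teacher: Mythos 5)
Your proof is correct and follows essentially the same route as the paper: both use the $(2P_2\cup P_1)$-freeness to force all components other than $D_1=G[\{u,v\}]$ to be trivial whenever there are at least three components, and then derive the same toughness contradiction from $c(G-S)=n-|S|-1\ge \frac{7n}{12}+1$. The only difference is cosmetic — you case-split on $c(G-S)\ge 3$ while the paper splits on whether a second nontrivial component exists.
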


\begin{proof}[Proof of Claim~\ref{sisacutset}]
	Clearly, $D_1 = G[\{u,v\}]$ is one component of $G-S$.
	As $|S \cup \{u,v\}| \le \frac{5n}{12}$, $c(G - S)\ge 2$. If $G-S$ has another nontrivial component different from $D_1$, 
	  then   by the $(2P_2 \cup P_1)$-freeness of $G$, we have $c(G-S) = 2$. 
	Thus we assume that all  the  components of $G-S$  other than $D_1$ are trivial. 
 This gives $c(G-S) = n - |S| - 1 \ge  \frac{7n}{12}+1$.  However, 
	\[
	\frac{|S|}{c(G-S)} \le \frac{\frac{5n}{12}-2}{\frac{7n}{12}+1}< 11,
	\]
	a contradiction.
\end{proof}

Let $D_1 = G[\{u,v\}]$ and $D_2$ be the other component of $G-S$. 
Since  $G$ is $(2P_2 \cup P_1)$-free and $D_1$ is nontrivial,  it follows that $D_2$ is  $(P_2 \cup P_1)$-free.
Let $$S_1 = \{x \in S : d_G(x,D_2) < \frac{2n}{t+1}\}, \, S_2 = S \setminus S_1, \, G_1 = G[S_1 \cup \{u,v\}],  \, G_2 = G[S_2 \cup V(D_2)].$$

\begin{CLA}\label{G1isP2cupP1free}
The graph 	$G_1$ is $(P_2 \cup P_1)$-free.
\end{CLA}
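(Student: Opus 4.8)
The plan is to argue by contradiction, promoting a forbidden configuration in $G_1$ to one in $G$. Suppose $G_1$ contains an induced $P_2\cup P_1$, say with edge $ab$ and isolated vertex $c$, where $a,b,c$ are distinct vertices of $S_1\cup\{u,v\}$; since $G_1=G[S_1\cup\{u,v\}]$ is an induced subgraph of $G$, we automatically get $c\nsim a$ and $c\nsim b$ in $G$. I will find an extra edge $xy$ inside the big component $D_2$ that is ``hidden'' from $a,b,c$, so that $\{a,b,x,y,c\}$ induces $2P_2\cup P_1$ in $G$, contradicting the hypothesis. First I record that $D_2$ is large: by Claim~\ref{sisacutset}, $G-S$ has only the two components $D_1$ and $D_2$, so $|V(D_2)|=n-|S|-2\ge \frac{7n}{12}$ using $|S|\le\frac{5n}{12}-2$.

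The engine of the argument is that none of $a,b,c$ sees much of $D_2$. If such a vertex lies in $S_1$, it has fewer than $\frac{2n}{t+1}$ neighbors in $D_2$ by the definition of $S_1$; if it is $u$ or $v$, it has no neighbor in $D_2$ at all, since $N_G(u)\cup N_G(v)\subseteq S\cup\{u,v\}$. Hence $W:=V(D_2)\setminus N_G(\{a,b,c\})$ satisfies $|W|> \frac{7n}{12}-\frac{6n}{t+1}\ge \frac{7n}{12}-\frac{n}{2}=\frac{n}{12}$, where the middle inequality uses $t\ge 11$.

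Next I would show $G[W]$ contains an edge. If not, $W$ is independent in $G$; since $u$ lies outside $V(D_2)$ and has no neighbor in $V(D_2)\supseteq W$, the set $W\cup\{u\}$ is independent, so $\alpha(G)\ge |W|+1> \frac{n}{12}+1$, contradicting $\alpha(G)\le \frac{n}{t+1}\le \frac{n}{12}$ (the bound obtained at the start of the proof from the toughness of $G$, again needing $t\ge 11$). So take an edge $xy$ of $G[W]$. Then $x,y\in V(D_2)$ are disjoint from $\{a,b,c\}\subseteq S_1\cup\{u,v\}$, and by the definition of $W$ neither $x$ nor $y$ is adjacent to any of $a,b,c$. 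Thus the only edges of $G[\{a,b,x,y,c\}]$ are $ab$ and $xy$, so this set induces $2P_2\cup P_1$ in $G$---the desired contradiction. Hence $G_1$ is $(P_2\cup P_1)$-free.

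The one step needing care is ruling out that $W$ is an independent set: this is exactly where the quantitative toughness bound $\alpha(G)\le \frac{n}{t+1}$ is used, and it is the reason the hypothesis $t\ge 11$ (i.e.\ $t+1\ge 12$) is invoked. The rest is routine counting with component sizes and the degree threshold defining $S_1$, and everything goes through symmetrically with $v$ in place of $u$.
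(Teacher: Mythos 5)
Your proof is correct and follows essentially the same route as the paper's: bound the $D_2$-neighborhoods of the three vertices of the induced $P_2\cup P_1$, conclude that the unseen part of $D_2$ exceeds $\alpha(G)\le \frac{n}{t+1}$ and hence contains an edge, and use that edge to build an induced $2P_2\cup P_1$ in $G$. Your treatment is in fact slightly more careful than the paper's, since you explicitly note that $u$ and $v$ have no neighbors in $D_2$ (the paper applies the $S_1$ degree bound to all three vertices without comment).
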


\begin{proof}[Proof of Claim~\ref{G1isP2cupP1free}]
	Assume  the statement is false.  We let $x,y,z \in S_1 \cup \{u,v\}$ such that $G_1[\{x,y,z\}] = P_2 \cup P_1$.
	Then by the definition of $S_1$, we have $ |N_G(\{x,y,z\}) \cap V(D_2)|< \frac{6n}{t+1}$.
	Since $|V(D_2)| \ge \frac{7n}{12}$, it follows that  $|V(D_2)| -  |N_G(\{x,y,z\}) \cap V(D_2)| >\frac{n}{t+1}$. 
As $ \alpha(D_2) \le \alpha(G) \le \frac{n}{t+1}$, 
	  $V(D_2) \setminus N_G(\{x,y,z\}) $ is not independent in $G$. Thus  there exist $u,v \in V(D_2) \setminus N_G(\{x,y,z\}) $ such that $u \sim v$. However,  $G[\{x,y,z,u,v\}] = 2P_2 \cup P_1$, a contradiction.
\end{proof}

A \emph{path-cover}  $\mathcal{Q}$ of $G_1$  is a  union of  some vertex-disjoint paths  
	such that $V(G_1)\subseteq V(\mathcal{Q})$.   
The path-cover 
	is  called \emph{$W$-matched}  for some $W\subseteq V(G_2)$ if the two endvertices of each path of $\mathcal{Q}$  belong  to $W$.

	\begin{CLA}\label{claim:path-cover}
	The graph $G_1$ has a $W$-matched path-cover $\mathcal{Q}$ for some $W\subseteq V(G_2)$ such that the internal vertices of 
	each path of $\mathcal{Q}$ are all from $V(G_1)$, and $\mathcal{Q}$ has exactly $\max\{1,s(G_1)\}$ components. 
	\end{CLA}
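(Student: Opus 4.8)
The plan is to use that a $(P_2\cup P_1)$-free graph is exactly a complete multipartite graph (its complement is $P_3$-free, hence a disjoint union of cliques). So by Claim~\ref{G1isP2cupP1free}, $G_1$ is complete multipartite, with parts $A_1,\dots,A_p$ of sizes $n_1\ge\cdots\ge n_p\ge 1$; since the adjacent pair $u,v$ lies in $G_1$ we have $p\ge 2$. One computes $s(G_1)=n_1-(n_2+\cdots+n_p)$ when $n_1\ge 2$, while $G_1$ is a complete graph when $n_1=1$; in both cases a minimum path-cover of $G_1$ has exactly $k:=\max\{1,s(G_1)\}$ paths (read as $k=1$ when $G_1$ is complete). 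More importantly, these minimum covers are rigid: when $s:=s(G_1)\ge 1$, writing $R=A_2\cup\cdots\cup A_p$ and $r=|R|$, every minimum cover consists of exactly $s$ paths, each alternating between $A_1$ and $R$ with both endvertices in $A_1$, and the $r$ vertices of $R$ may be allotted to the paths freely; in particular, for every $t_0$ with $\max\{0,\,s-r\}\le t_0\le s-1$ there is such a cover in which exactly $t_0$ of the paths are single vertices of $A_1$.

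I would then split on $s(G_1)$. If $G_1$ is complete or $s(G_1)<0$, then by Lemma~\ref{jung} $G_1$ has a Hamilton path between any two of its vertices; if $s(G_1)=0$, inspecting the possible ends of a Hamilton path in a complete multipartite graph with scattering number $0$ shows $G_1$ has a Hamilton path whose two endvertices lie in prescribed parts, at least one of which is $A_1$. In each case it remains to choose the two ends $a,b\in V(G_1)$ of the covering path so that each has a neighbour in $V(G_2)$ and these neighbours can be chosen distinct. Call $x\in V(G_1)$ \emph{bad} if $N_G(x)\cap V(G_2)=\emptyset$; then a bad vertex has all its neighbours in $V(G_1)$. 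Applying $t$-toughness to the cutset $N_G(B')$, where $B'$ is a maximum independent set of bad vertices, and using $\kappa(G)\ge 2t\ge 22$ to exclude the degenerate possibility that every non-bad vertex has one common neighbour in $V(G_2)$, I would show the bad vertices are too few to block a valid choice of $a,b$; then $W=\{w_a,w_b\}$ with $w_a\in N_G(a)\cap V(G_2)$, $w_b\in N_G(b)\cap V(G_2)$, $w_a\neq w_b$, completes this case.

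The substantive case is $s:=s(G_1)\ge 1$, where the rigidity forces all $2s$ endvertex-slots onto the independent set $A_1$. Let $Z\subseteq A_1$ maximize $2|Z|-|N_G(Z)\cap V(G_2)|$; since $Z=\emptyset$ is allowed, this maximum is $\ge 0$, and a standard swapping argument then gives $|N_G(X')\cap V(G_2)|\ge 2|X'|$ for every nonempty $X'\subseteq A_1\setminus Z$. Because the $G_1$-neighbourhood of any vertex of $A_1$ is exactly $R$, the set $R\cup(N_G(Z)\cap V(G_2))$ is a cutset of $G$ in which every vertex of $Z$ becomes isolated; $t$-toughness then yields $|Z|\le (r-t)/(t-2)$, hence $|Z|\le r-1$ and $|A_1\setminus Z|=n_1-|Z|=(r+s)-|Z|\ge s+1$. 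I would now take the cover from the first paragraph with $t_0=s-1$, choosing its $s+1$ endvertices inside $A_1\setminus Z$, and assign weights $f\equiv 2$ to the $s-1$ single-vertex paths and $f\equiv 1$ to the two ends of the one long path, so $\sum f=2s$. Since $A_1\setminus Z$ doubles into $V(G_2)$, the pair $(X,f)$ satisfies the hypothesis of Lemma~\ref{akiyama_and_kano} for the bipartite graph $G[X,V(G_2)]$, producing pairwise distinct attachment vertices in $V(G_2)$; replacing each single-vertex path $x$ by $w\,x\,w'$ and the long path $aPb$ by $w_a\,aPb\,w_b$, with the $w$'s the attachment vertices, gives the desired $\mathcal{Q}$ together with $W\subseteq V(G_2)$, having exactly $s=\max\{1,s(G_1)\}$ components.

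I expect the main obstacle to be the toughness bookkeeping that forces $|Z|<r$ (and, in the previous case, forces the bad vertices to be scarce): this is precisely where the hypothesis $t\ge 11$ is consumed, via $\kappa(G)\ge 2t$, $\delta(G)\ge 2t$, and $\alpha(G)\le n/(t+1)$. A secondary nuisance is the subcase $s(G_1)=0$, where a minimum cover (a single Hamilton path) is forced to have one endvertex outside $A_1$, so one must additionally bound the bad vertices lying in the smaller parts of $G_1$.
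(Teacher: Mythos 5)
Your proposal is correct in outline and ends up building the same object as the paper — one ``long'' path absorbing $R=V(G_1)\setminus A_1$ plus $s-1$ cherries $w\,x\,w'$, attached to $V(G_2)$ at $2\max\{1,s(G_1)\}$ distinct vertices — but it gets there by a different mechanism. You make the complete multipartite structure of $G_1$ explicit (complement of $P_3$-free), compute $s(G_1)=2n_1-|V(G_1)|$, and use the rigidity of minimum path covers to force all endvertex slots into $A_1$; you then secure the attachments by a deficiency argument (the maximizer $Z$ of $2|Z|-|N_G(Z)\cap V(G_2)|$, bounded via toughness by $|Z|\le (r-t)/(t-2)<r$) followed by a direct application of Lemma~\ref{akiyama_and_kano} with mixed weights $f\in\{1,2\}$. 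The paper instead never names the multipartite structure: it takes a minimum cutset $T$ of $G_1$ (which is exactly your $R$), shows $T$ is a scattering set, and invokes Lemma~\ref{lem:star-matching} to get a global $K_{1,2}$-matching centered at $A_1$ whose partners may lie in $T$ \emph{or} $V(G_2)$; the centers with a $T$-partner (your $Z$, its $U$) are then absorbed into the long path by choosing $U^*\supseteq U$ with $|U^*|=|T|+1$. The paper's trick avoids your separate toughness computation for $|Z|$; your version makes the Hall-type condition and the role of $t\ge 11$ more transparent. Both are valid.

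Two loose ends in your sketch, neither fatal. First, in the $s(G_1)\le 0$ cases your treatment of ``bad'' vertices and of the forced endvertex locations is only gestured at; note in particular that your closing remark is slightly off: when $s(G_1)=0$ a Hamilton path is forced to have an end outside $A_1$ only when $R$ is independent (i.e.\ $p=2$); for $p\ge 3$ both ends may lie in $A_1$. The needed facts do follow from your tools (bad vertices within one part $A_i$ have all neighbours in $V(G_1)\setminus A_i$, so toughness bounds their number by $|V(G_1)\setminus A_i|/t<|A_i|$, and a unique common $G_2$-neighbour would be a $1$-cutset), but they must be written out, as the paper does via its subcases $|V(G_1)|\le 21$ versus $|T|\ge 11$. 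Second, when applying toughness to the cutset $R\cup(N_G(Z)\cap V(G_2))$ you should note why $c\ge 2$ (for $|Z|=1$ use $A_1\setminus Z\ne\emptyset$, which holds since $n_1\ge 2$ when $s\ge 1$); and you should separately record the trivial case $Z=\emptyset$, $r\ge 1$. With those details filled in, the argument goes through.
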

	
	\proof[Proof of Claim~\ref{claim:path-cover}]
		 We construct $\mathcal{Q}$ according to the scattering number of $G_1$. 
	
		Consider first that  $s(G_1) \le -1$. 
	By Claim~\ref{G1isP2cupP1free}, $G_1$ is $(P_2 \cup P_1)$-free.  As $P_2 \cup  P_1 \subseteq P_4$, $G_1$ is $P_4$-free. 
	Thus $G_1$ is Hamiltonian-connected by Lemma~\ref{jung}. 
	As $G$ is 11-tough and $|V(G_1)| \ge 2$, there are distinct $x,y\in V(G_1)$ and distinct $z,w\in V(G_2)$ such that $x\sim z$ and $y\sim w$. 
	Let $P$ be a Hamiltonian $(x,y)$-path in $G_1$. Then $zxPyw$
	is a desired $W$-matched path-cover of $G_1$ with $W=\{z,w\}$.

Suppose then that  $s(G_1)  \ge 0$.  Let $T\subseteq V(G_1)$ be a minimum  cutset of $G_1$. Then by Lemma~\ref{propertiesofP2cupP1freegraphs},  $c(G_1-T)=|V(G_1)\setminus T|$ and $G_1[T,V(G_1)\setminus T]$ is a complete bipartite graph. 
Furthermore, as  $c(G_1-T) -|T| \le s(G_1)$ and $c(G_1-T^*)-s(G_1)=|T^*|   \ge |T| $ for any scattering set $T^*$ of $G_1$,  
  we get $c(G_1-T^*) \ge c(G_1-T)$. As  $c(G_1-T^*)=|V(G_1)\setminus T^*|$ and  $c(G_1-T)=|V(G_1)\setminus T|$, we get  $|T^*| \le |T|$
and so $|T^*|=|T|$.  Thus $T$ is a scattering set of $G_1$, and so  $|T| \le c(G_1-T)$ by  the assumption that $s(G_1)  \ge 0$. 

As $G$ is 11-tough, Lemma~\ref{lem:star-matching} implies that $G$ has a $K_{1,2}$-matching  $M$ with vertices of $V(G_1)\setminus T$
 precisely as its centers. Since $|T| \le |V(G_1)\setminus T|$, at most $\lfloor \frac{1}{2}|V(G_1)\setminus T| \rfloor$ vertices of $V(G_1)\setminus T$
have both their partners as $T$-vertices.   Let $U\subseteq V(G_1)\setminus T$ such that every vertex of $U$ has 
a partner from $T$. Then we have $|U| \le |T|$, and if $|U| =|T|$, then at least one vertex of $U$ has a partner from $V(G_2)$. 
If $|T|<|V(G_1)\setminus T| $, then  let $U^*$ be a subset of $V(G_1)\setminus T$ with $U\subseteq U^*$ such that $|U^*|=|T|+1$. 
Let $x,y\in U^*$ such that both $x$ and $y$ have a partner from $V(G_2)$, where $x$ and $y$ exist by our choice of $U$. 
 We let $P$ be a Hamiltonian $(x,y)$-path of 
$G_1[T,U^*]$ ($G_1[T,U^*]$ is a compelte bipartite graph), and let $z,w$ be respectively a partner of $x$ and $y$ from $V(G_2)$. 
Then with $zxPyw$ and the rest paths of $M$ not containing any vertex of $P$, form a desired 
$W$-matched path-cover of $G_1$ with $W$ as the set of endvertices of these paths.

Thus we assume that $|T|=|V(G_1)\setminus T| $.    If $|V(G_1)|  \le 21$, then as $\delta(G) \ge 22$,   
every vertex of $G_1$ has in $G$ at least two neighbors from $V(G_2)$.     Let $x\in T$ and $y\in V(G_1)\setminus T$, and let $z, w\in V(G_2)$ be distinct such that  $x\sim z$ and $y\sim w$.  Let $P$ be a Hamiltonian $(x,y)$-path of  $G_1$. Then $zxPyw$
is a desired $W$-matched path-cover of $G_1$ with $W=\{z,w\}$.  
Thus we assume that $|V(G_1)|  \ge 22$.  
This implies that $|T|=|V(G_1)\setminus T| \ge 11$. Thus 
there are at least two distinct vertices  $x, y$ of $V(G_1)\setminus T$ 
that each has a partner from $V(G_2)$.  Let $z,w$ be respectively a partner of $x$ and $y$ from $V(G_2)$.

If there are $x_1,x_2\in T$ such that $x_1\sim x_2$, then  $G_1$ has a Hamiltonian-$(x,y)$ path $P$ (the path must contain the edge $x_1x_2$). 
Then $zxPyw$ is a desired 
$W$-matched path-cover of $G_1$ with $W=\{z,w\}$. 

Thus we assume that $T$ is also an independent set in $G$. As $G$ is 11-tough, and $|T|=|V(G_1)\setminus T|$, there exists $x^*\in T$
such that $x^*$ has in $G$ a neighbor  $z^*$ from $V(G_2)$ with  $z^*\ne z$. 
We let $P$ be a Hamiltonian $(x,x^*)$-path of 
$G_1$.  Then $zxPx^*z^*$ is a desired 
$W$-matched path-cover of $G_1$ with $W=\{z,z^*\}$. 
\qed 
 
Let $\mathcal{Q}$ be a  $W$-matched path-cover of $G_1$ for some $W\subseteq V(G_2)$ that is described in Claim~\ref{claim:path-cover}.
Let $L =\{xy: \text{$x$ and $y$ are two endvertices of a component of $\mathcal{Q}$}\}$ be the set of edges of $G\cup \overline{G}$ 
with endvertices as endvertices  of components of $\mathcal{Q}$.  Let $G_2^*$ be obtained from $G_2$ by adding all edges of $L$ 
that are not in $G_2$ already.

 \begin{CLA}\label{KappaofG2star}
 We have 	$\kappa (G_2^*) \ge |L| + \alpha(G_2^*)$. 
 \end{CLA}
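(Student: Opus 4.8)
The plan is to bound an arbitrary vertex cutset $X$ of $G_2^*$ from below by $|L|+\alpha(G_2^*)$, exploiting that $G_2$ contains the highly connected $(P_2\cup P_1)$-free graph $D_2$, together with crude upper bounds on $|L|$ and $\alpha(G_2^*)$ and a careful analysis of the side of the cut that misses $D_2$.

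First, the bookkeeping. Since the components of $\mathcal{Q}$ are pairwise vertex-disjoint, $L$ is a set of $|L|=\max\{1,s(G_1)\}$ independent edges (so Lemma~\ref{haggkvist_cycles} will be applicable to $G_2^*$ later), and its endvertex set $W\subseteq V(G_2)$ satisfies $|W|=2|L|$ with $L$ a perfect matching on $W$. Because $G_1$ is $(P_2\cup P_1)$-free and contains the edge $uv$, every other vertex of $G_1$ is adjacent to $u$ or $v$, so $G_1$ is connected; hence every $T\subseteq V(G_1)$ with $c(G_1-T)\ge 2$ has $|T|\ge1$, and so $s(G_1)\le|V(G_1)|-2=|S_1|\le|S|$ (and $|L|=1$ if $G_1$ is complete). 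In either case $|L|\le\frac{5n}{12}$. Also $\alpha(G_2^*)\le\alpha(G_2)\le\alpha(G)\le\frac{n}{t+1}\le\frac{n}{12}$, since $G_2^*$ is obtained from the induced subgraph $G_2$ of $G$ by adding edges; in particular $|L|+\alpha(G_2^*)\le\frac n2$. Finally, since $G$ is $(2P_2\cup P_1)$-free and $D_1$ is nontrivial, $D_2$ is $(P_2\cup P_1)$-free, so by Lemma~\ref{propertiesofP2cupP1freegraphs}, $\kappa(D_2)=\delta(D_2)\ge|V(D_2)|-\alpha(D_2)\ge\frac{7n}{12}-\frac{n}{12}=\frac n2$, using $|V(D_2)|=n-|S|-2\ge\frac{7n}{12}$ and $t\ge11$.

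Now assume $G_2^*$ is noncomplete (otherwise $\kappa(G_2^*)=|V(G_2^*)|-1\ge|V(D_2)|-1\ge|L|+\alpha(G_2^*)$), and let $X$ be any cutset of $G_2^*$. If $|X\cap V(D_2)|\ge|V(D_2)|-\alpha(D_2)$, then $|X|\ge|V(D_2)|-\alpha(D_2)\ge\frac n2\ge|L|+\alpha(G_2^*)$ and we are done. Otherwise $|X\cap V(D_2)|<|V(D_2)|-\alpha(D_2)\le\delta(D_2)=\kappa(D_2)$, so $D_2-(X\cap V(D_2))$ is connected and nonempty, hence lies inside a single component $A$ of $G_2^*-X$; pick another component $B$ of $G_2^*-X$. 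Then $B\cap V(D_2)=\emptyset$, so $B\subseteq S_2$, and each $y\in B$ has all of its (at least $\tfrac{2n}{t+1}$) neighbours in $V(D_2)$ lying in $X$ (no $G_2^*$-edge joins $A$ and $B$), so $|X\cap V(D_2)|\ge\frac{2n}{t+1}$. Moreover $V(D_2)\setminus X$ is a subset of $V(D_2)$ of size $>\alpha(D_2)$, hence not independent, so it contains an edge $p_1p_2$ with $p_1,p_2\in A$. I then claim $G[B]$ is $(P_2\cup P_1)$-free: otherwise $G[B]$ has an induced $P_2\cup P_1$ on some $\{a,b,c\}$ with $ab\in E(G)$ and $c\nsim a,b$, and since there are no $G_2^*$-edges (hence no $G$-edges) between $A$ and $B$, the vertices $p_1,p_2$ are non-adjacent in $G$ to each of $a,b,c$, whence $G[\{p_1,p_2,a,b,c\}]\cong 2P_2\cup P_1$, a contradiction.

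It remains to show $|X|\ge|L|+\alpha(G_2^*)$ in this last case, and this is the main obstacle. Since $|X\cap V(D_2)|\ge\frac{2n}{t+1}\ge\frac{n}{t+1}+\alpha(G_2^*)$, it suffices to prove $|X\cap S_2|\ge|L|-\frac{n}{t+1}$; this is immediate if $|L|\le\frac{n}{t+1}$, so assume $|L|=s(G_1)=|V(G_1)|-2\delta(G_1)>\frac{n}{t+1}$ (the value of $s(G_1)$ coming from Lemma~\ref{propertiesofP2cupP1freegraphs}). The attack I would take combines: (i) the set $N_G(B)$ is a cutset of $G$ with $N_G(B)\setminus V(G_1)\subseteq X$, so $|X|\ge|N_G(B)|-|N_G(B)\cap V(G_1)|$; (ii) the $t$-toughness of $G$ applied to $N_G(B)$, together with a count of the components of $G-N_G(B)$ (one containing $B$, one containing the connected set $V(D_2)\setminus N_G(B)$, and possibly trivial ones in $V(G_1)$); and (iii) the identity $|V(G_1)|=|L|+2\delta(G_1)$ together with the fact, obtained from the construction of $\mathcal{Q}$ in Claim~\ref{claim:path-cover}, that each path of $\mathcal{Q}$ not entirely contained in $A$ forces at least one of its two endvertices of $S_2$ into $X\cap S_2$. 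The freeness of $G[B]$ from $P_2\cup P_1$ just established — so that $G[B]$ is connected when $|B|\ge3$ and $\delta(G[B])\ge|B|-\alpha(G)$ by Lemma~\ref{propertiesofP2cupP1freegraphs} — should be used to bound $|B|$ in terms of $|N_G(B)|$. Carrying out this bookkeeping, and disposing of the small cases $|B|\le2$ separately, is where the bulk of the work lies.
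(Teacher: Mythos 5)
Your argument has a genuine gap: the case you yourself identify as ``the main obstacle'' --- showing $|X\cap S_2|\ge |L|-\frac{n}{t+1}$ when $|L|>\frac{n}{t+1}$ --- is never proved. You only list an ``attack'' in three steps and state that carrying out the bookkeeping is where the bulk of the work lies. As written, the proof is incomplete precisely at the point where it would have to close.

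The way out is an observation you missed, and it is the one the paper uses: $|L|=\max\{1,s(G_1)\}\le \alpha(G_1)\le\alpha(G)\le\frac{n}{t+1}$. Indeed, for any $T\subseteq V(G_1)$ with $c(G_1-T)\ge 2$, picking one vertex from each component of $G_1-T$ gives an independent set, so $c(G_1-T)\le\alpha(G_1)$ and hence $s(G_1)\le\alpha(G_1)-|T|\le\alpha(G_1)$. (Your crude bound $|L|\le |V(G_1)|-2\le\frac{5n}{12}$ is what forces you into the hard case; the sharper bound makes that case vacuous.) With $|L|\le\frac{n}{t+1}$ and $\alpha(G_2^*)\le\frac{n}{t+1}$ in hand, it suffices to show $\kappa(G_2)\ge\frac{2n}{t+1}$, and your own cutset analysis already delivers this: in the nontrivial case you correctly show that a component $B$ of $G_2^*-X$ disjoint from $V(D_2)$ lies in $S_2$ and forces $|X\cap V(D_2)|\ge\frac{2n}{t+1}$ via the definition of $S_2$. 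That part of your write-up is sound and is essentially the paper's argument (the paper phrases it for $G_2$ rather than $G_2^*$, using $\kappa(G_2^*)\ge\kappa(G_2)$, and rules out the ``$X$ swallows most of $D_2$'' branch by toughness rather than by your $\delta(D_2)\ge n/2$ computation, but these are cosmetic differences). The detour through $G[B]$ being $(P_2\cup P_1)$-free and the entire final paragraph can be deleted once the correct bound on $|L|$ is in place.
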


 \begin{proof}[Proof of Claim~\ref{KappaofG2star}]
 	By Claim~\ref{claim:path-cover}, we have $|L|  =\max\{1,s(G_1)\} \le \alpha(G_1) \le \alpha(G) \le \frac{n}{t+1}$. 
 	Since  $\kappa(G_2^*) \ge \kappa(G_2)$ and $\alpha(G_2) \le \alpha(G) \le \frac{n}{t+1}$,  it suffices to show that 
 	 $\kappa(G_2) \ge \frac{2n}{t+1}$. 
 	
As $|V(G_2)|=|S_2|+|V(D_2)| \ge \frac{7n}{12}$, we assume that $G_2$ is not a complete graph. 
Let $W\subseteq V(G_2)$ be  a minimum cutset of $G_2$.  Suppose to the contrary that  $|W|<\frac{2n}{t+1}$. 
 As $G_2 \subseteq G$ and so is $(2P_2 \cup P_1)$-free, 
$G_2-W$ has at most two nontrivial components, and if $G_2-W$ has two nontrivial components, then $c(G_2-W)=2$. 
Furthermore, since $|V(G_1)\cup W| <\frac{7n}{12}$ and $|V(D_2)\setminus W|>\frac{5n}{12}$ , and $c(G-(V(G_1)\cup W)) =c(G_2-W)$,  the toughness of $G$ implies that 
$G_2-W$ has a nontrivial component, and one of the  nontrivial components contains an edge of $D_2$. 
We assume that $Q_1$ is a component of $G_2-W$   that contains an edge of $D_2$. 
Then as  $G$ is $(2P_2 \cup P_1)$-free and $D_1$ is nontrivial, we know that $D_2$ is $(P_2\cup P_1)$-free. 
Thus, all other components of $G_2-W$ contains only vertices from $S_2$.  Let $x\in S_2$ 
be contained in a component  $Q_2$ of $G_2-W$  that is not $Q_1$, 
Since $d_G(x, D_2) \ge \frac{2n}{t+1}$ by the definition of $S_2$ and $V(Q_2)\cap V(D_2)=\emptyset$, it follows that  $|W| \ge |N_G(x)\cap V(D_2)| \ge \frac{2n}{t+1}$, a contradiction to the assumption that $|W|<\frac{2n}{t+1}$. 
 \end{proof}

 By Claim~\ref{KappaofG2star} and Lemma~\ref{haggkvist_cycles}, $G_2^*$ has a Hamilton cycle $C^*$ such that $L \subseteq E(C^*)$. Replacing each edge $xy$ of  $L$ in the cycle $C^*$  
 by the path of $\mathcal{Q}$ with endvertices $x$ and $y$ gives a Hamilton cycle of $G$.

{\bf \noindent Case 2:  Any  $uv \in E(G)$ satisfies $|N_G(u) \cup N_G(v)| > \frac{5n}{12}$.}

\smallskip

Let
  $$S = \{v \in V(G) : d_G(v) < \frac{5n}{24}\} \quad \text{and}  \quad S_1=\{x\in S: d_G(x) <\frac{n}{t+1}\}.$$
  As we assumed that $\delta(G) \le \frac{n}{t+1}-1$, we know that $S_1\ne \emptyset$. 
By the assumption of Case 2, $S$ is an independent set in $G$.
As $G$ is $t$-tough, we have $|S| \le  \frac{n}{t+1}$. By Lemma~\ref{lem:star-matching}, 
there exists a $K_{1,2}$-matching $M$ in $G$  with vertices of  $S_1$ precisely as its centers. 
 
Let $G_2=G-S$ and $L=\{xy:  \text{$x$ and $y$ are endvertices of a path of $M$}\}$.  Let  $G_2^*$ be obtained from $G_2$
by adding all the edges of $L$ that are not already in $G_2$.  
We will find in $G_2^*$ a Hamilton cycle containing all edges of $L$, expand it to a cycle containing all vertices
of $G_2$ and $S_1$, and finally ``inserting'' the vertices of $S\setminus S_1$ into the cycle to get a Hamilton cycle of $G$. 

\begin{CLA}\label{claim:connectivity-G^*}
We have $\kappa(G_2^*) \ge |L|+\alpha(G_2^*)$. 
\end{CLA}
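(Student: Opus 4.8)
The plan is to follow the blueprint of Claim~\ref{KappaofG2star}: first pin down $|L|$ and $\alpha(G_2^*)$, and then reduce everything to a connectivity lower bound obtained by a cutset analysis. First I would note that $M$ is the union of exactly $|S_1|$ vertex-disjoint copies of $K_{1,2}$, one centered at each vertex of $S_1$; each such copy, viewed as a $3$-vertex path, has its two leaves as its endvertices and so contributes exactly one edge to $L$, whence $|L|=|S_1|$. Since $S$ is independent in $G$ by the Case~2 hypothesis, the toughness of $G$ gives $|S_1|\le|S|\le\alpha(G)\le\frac{n}{t+1}$; likewise $\alpha(G_2^*)\le\alpha(G_2)\le\alpha(G)\le\frac{n}{t+1}$, because $G_2$ is an induced subgraph of $G$ and adding the edges of $L$ cannot increase the independence number. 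Hence $|L|+\alpha(G_2^*)\le\frac{2n}{t+1}$, and since $G_2\subseteq G_2^*$ it suffices to show $\kappa(G_2)\ge\frac{2n}{t+1}$; if that bound turns out not to be attainable directly, one would instead squeeze out the extra room coming from the fact that $L$ is a matching of size $|S_1|$ in $G_2^*$, which forces $|L|+\alpha(G_2^*)$ down toward $\alpha(G_2)\le\frac{n}{t+1}$.

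Next, I would suppose for contradiction that $G_2$ has a cutset $W$ with $|W|<\frac{2n}{t+1}$ and analyze the components of $G_2-W$, using two facts: every vertex $v$ of $G_2$ lies outside $S$, so $d_G(v)\ge\frac{5n}{24}$ with $N_G(v)$ contained in $S\cup W$ together with the component of $G_2-W$ containing $v$; and (Case~2) every edge $ab$ of $G$ has $|N_G(a)\cup N_G(b)|>\frac{5n}{12}$. If all components of $G_2-W$ were trivial, then $c(G-(S\cup W))=c(G_2-W)=|V(G_2)|-|W|$ is close to $n$ while $|S\cup W|<\frac{3n}{t+1}$, contradicting $\tau(G)\ge t\ge11$; so there is a nontrivial component, and since $G$ is $(2P_2\cup P_1)$-free, $G_2-W$ has at most two nontrivial components, and exactly two components if two of them are nontrivial. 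A trivial component $\{x\}$ forces $N_G(x)\subseteq S\cup W$, hence $\frac{5n}{24}\le|S|+|W|$; an edge $ab$ inside a nontrivial component $Q$ forces $\frac{5n}{12}<|N_G(a)\cup N_G(b)|\le|V(Q)|+|S|+|W|$, so that component must be large; and whenever a trivial component is present, every nontrivial component is induced-$2K_2$-free, since such a $2K_2$ together with a trivial-component vertex would be an induced $2P_2\cup P_1$ of $G$. Feeding these into the toughness inequality $c(G-(S\cup W))\le\frac{|S|+|W|}{t}$, together with the fact (from minimality of $W$) that each vertex of $W$ has a neighbor in every component of $G_2-W$, should push $|W|$ up to at least $\frac{2n}{t+1}$, the desired contradiction.

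I expect this last step to be the main obstacle. With only $t\ge11$ the quantity $\frac{3n}{t+1}$ is barely larger than $\frac{5n}{24}$, so the crude estimate ``a trivial-component vertex has all its neighbors in $S\cup W$'' does not by itself close the gap. The delicate situations are when $G_2-W$ consists of one large nontrivial (hence $2K_2$-free) component together with some trivial components, or of exactly two large nontrivial components; handling these should require applying the Case~2 neighborhood-union bound to several edges simultaneously and invoking the structure of $2K_2$-free graphs, rather than the single-edge bound above, and it is here that the precise constant $\frac{5n}{24}$ (and the choice $t\ge11$) must be used.
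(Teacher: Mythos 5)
Your proposal sets up the right framework (reduce to a cutset bound on $G_2$, then analyze trivial versus nontrivial components of $G_2-W$ using toughness, the Case~2 hypothesis, and $(2P_2\cup P_1)$-freeness), but it stops short of the two ideas that actually close the argument, and you say so yourself. First, your reduction target is off: you aim for $\kappa(G_2)\ge\frac{2n}{t+1}$, whereas the paper proves $\kappa(G_2)\ge |L|+\frac{n}{t+1}$. This is not cosmetic. In the trivial-component case with $\{y\}$ a component of $G_2-W$ and $y$ having no neighbor in $S_1$, the available estimate is $|W|\ge d_{G_2}(y)\ge \frac{5n}{24}-|S\setminus S_1| = \frac{5n}{24}-|S|+|L|\ge \frac{n}{8}+|L|$. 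Since $\frac{n}{8}\ge\frac{n}{t+1}$ this beats $|L|+\frac{n}{t+1}$, but it does \emph{not} beat $\frac{2n}{t+1}=\frac{n}{6}$ when $|L|$ is small; the $|L|$ term must appear on both sides to cancel. Your hedge about squeezing $|L|+\alpha(G_2^*)$ down toward $\alpha(G_2)$ does not work either: adding a matching $L$ to $G_2$ need not decrease the independence number by $|L|$.

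Second, the two subcases you flag as "the main obstacle" are resolved in the paper by arguments you have not supplied. For a trivial component $\{y\}$ adjacent to some $x\in S_1$, apply the Case~2 bound to the edge $xy$: since $d_G(x)<\frac{n}{t+1}\le\frac{n}{12}$, you get $d_G(y)>\frac{5n}{12}-\frac{n}{12}=\frac{n}{3}$, which contradicts $N_G(y)\subseteq S\cup W$ with $|S\cup W|<\frac{3n}{t+1}\le\frac{n}{4}$; the split on whether $y$ meets $S_1$ is what makes the constant $\frac{5n}{24}$ suffice. For two nontrivial components $Q_1,Q_2$, the decisive step is not a neighborhood-union count over several edges but a degree bound on the low-degree vertices themselves: each $Q_i$ is $(P_2\cup P_1)$-free and large, so by Lemma~\ref{propertiesofP2cupP1freegraphs}(4) it satisfies $\delta(Q_i)>\frac{1}{2}|V(Q_i)|$, hence is Hamiltonian by Dirac and contains $\frac{n}{t+1}$ independent edges. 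Pairing one edge from $Q_1$ with one from $Q_2$, any $x\in S_1$ must hit one of the four endvertices (else an induced $2P_2\cup P_1$ appears), and running over $\frac{n}{t+1}$ disjoint pairs forces $d_G(x)\ge\frac{n}{t+1}$, contradicting the definition of $S_1$. Without these two mechanisms the proof does not close, so as written the proposal has a genuine gap.
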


\proof  
Since  $\kappa(G_2^*) \ge \kappa(G_2)$ and $\alpha(G_2^*) \le \alpha(G) \le \frac{n}{t+1}$,  it suffices to show that 
$\kappa(G_2) \ge  |L| +\frac{n}{t+1}$.  
As $|V(G_2)|  \ge \frac{11n}{12}$, we assume that $G_2$ is not a complete graph. 
Let $W\subseteq V(G_2)$ be a  minimum cutset of $G_2$. Suppose to the contrary that  $|W|< |L| +\frac{n}{t+1}$.  

Consider first that  $G_2-W$ has a trivial component consisting of a single vertex $y$.  Then $d_G(y) >\frac{4n}{12}$
if $y\sim x$ for some $x\in S_1$ by the assumption of Case 2. As $|S \cup W| <\frac{3n}{t+1}$ and $N_G(y) \subseteq S\cup W$, we obtain a contradiction. Thus we assume that $y\not\sim x$
for any $x\in S_1$. Then we have  $d_{G_2}(y) \ge \frac{5n}{24} -|S\setminus S_1| =  \frac{5n}{24}-|S|+|L|$. 
Thus $|W| \ge \frac{5n}{24}-|S|+|L|$. Since $|S| \le \frac{n}{t+1}$, we get a contradiction to the assumption that $|W|< |L| +\frac{n}{t+1}$.   

Thus $G_2-W$ has no trivial component.  Since $G_2$ is $(2P_2\cup P_1)$-free, it follows that $G_2-W$ has exactly two components that  are nontrivial.  Let $Q_1$ and $Q_2$ be these two components. 
As $|S \cup W| < \frac{3n}{t+1}$ and $|N_G(u) \cup N_G(v)| >\frac{5n}{12}$ for all $uv \in E(G)$, we have $|V(Q_i)| > \frac{5n}{t+1} - \frac{3n}{t+1}\ge \frac{2n}{t+1}$.  Since $G$ is $(2P_2\cup P_1)$, each $Q_i$ is $(P_2\cup P_1)$-free. Thus by 
Lemma~\ref{propertiesofP2cupP1freegraphs}(4), we have $\delta(Q_i) \ge |V(Q_i)|  -\alpha(Q_i)  \ge  |V(Q_i)|  -\frac{n}{t+1} >\frac{1}{2}|V(Q_i)|$. 
By Lemma~\ref{dirac}, each $Q_i$ is Hamiltonian.
Thus  each $Q_i$ has  a set $M_i$ of at least $\frac{n}{t+1}$ independent edges. Let $x\in S_1$. 
 Take  any edge $x_1y_1\in M_1$ and 
any edge $x_2y_2\in M_2$,  as $G$ is $(2P_2 \cup P_1)$-free,  $x$ must be adjacent in $G$  to one of the four vertices $x_1,y_1,x_2, y_2$. 
As we can form at least $\frac{n}{t+1}$ such pairs of edges  with one from  $M_1$ and  the other from $M_2$, it follows that 
$d_G(x) \ge \frac{n}{t+1}$. This gives a contradiction to the definition of $S_1$.  
\qed

By Claim~\ref{claim:connectivity-G^*} and Lemma~\ref{haggkvist_cycles}, $G_2^*$ contains a Hamilton cycle $C^*$ containing all edges of $L$.
Replacing each edge $xy$ of  $L$ in the cycle $C^*$  
by the path of $M$ with endvertices $x$ and $y$ gives a  cycle $C^{**}$ of $G$ that covers all vertices of $G_2$ and $S_1$.  If $S\setminus S_1=\emptyset$, then $C^{**}$ is already a Hamilton cycle of $G$. Thus we assume that $S\setminus S_1 \ne \emptyset$. 

For each vertex $x\in S\setminus S_1$, we have $d_G(x) =d_{G}(x, G_2) \ge \frac{n}{t+1}$. By Lemma~\ref{lem:vertex-insetting}, 
we can extend $C^{**}$ to a Hamilton cycle of $G$ by inserting vertices of $S\setminus S_1$ one by one.   

The proof of Theorem~\ref{main} is now complete. 
\end{proof}


\end{document}